\documentclass[12pt,reqno]{amsart}

\usepackage{amssymb,amsthm}
\usepackage{amsfonts,cmtiup,comment,stmaryrd}

\usepackage{mathrsfs,cmtiup}

\makeatletter
\def\blfootnote{\xdef\@thefnmark{}\@footnotetext}
\makeatother

\newtheorem{theorem}{Theorem}[section]

\newtheorem{lemma}[theorem]{Lemma}
\newtheorem{proposition}[theorem]{Proposition}
\newtheorem{corollary}[theorem]{Corollary}

\theoremstyle{definition}
\newtheorem{definition}[theorem]{Definition}
\newtheorem{remark}[theorem]{Remark}
\newtheorem*{definition*}{Definition}

\let\leq=\leqslant
\let\geq=\geqslant
\setlength{\topmargin}{0cm} \setlength{\oddsidemargin}{-0.0cm}
\setlength{\evensidemargin}{-0.0cm}
\pagestyle{plain}
\textwidth=15cm
\textheight=20cm
\numberwithin{equation}{section}
\newcommand{\ed}{\end{document}}

\begin{document}
\title{Almost Engel compact groups}

\author{E. I. Khukhro}
\address{Sobolev Institute of Mathematics, Novosibirsk, 630090, Russia, and\newline University of Lincoln, U.K.}
\email{khukhro@yahoo.co.uk}

\author{P. Shumyatsky}

\address{Department of Mathematics, University of Brasilia, DF~70910-900, Brazil}
\email{pavel@unb.br}

\dedicatory{To Efim Zelmanov on occasion of his 60th birthday}

\keywords{Compact groups; profinite groups; finite groups; Engel condition; locally nilpotent groups}
\subjclass[2010]{20D25, 20E18, 20F45}

\begin{abstract}
We say that a group $G$ is almost Engel if for every $g\in  G$ there is a finite set ${\mathscr E}(g)$ such that for every $x\in G$ all sufficiently long commutators $[...[[x,g],g],\dots ,g]$ belong to ${\mathscr E}(g)$, that is, for every $x\in G$ there is a positive integer $n(x,g)$ such that $[...[[x,g],g],\dots ,g]\in {\mathscr E}(g)$ if $g$ is repeated at least $n(x,g)$ times. (Thus, Engel groups are precisely the almost Engel groups for which we can choose ${\mathscr E}(g)=\{ 1\}$ for all $g\in G$.)

We prove that if a compact (Hausdorff) group $G$ is almost Engel, then
$G$ has a finite normal subgroup $N$ such that $G/N$ is locally nilpotent. If in addition there is a uniform bound $|{\mathscr E}(g)|\leq m$ for the orders of the corresponding sets, then the subgroup $N$ can be chosen of order bounded in terms of $m$. The proofs use the Wilson--Zelmanov theorem saying that Engel profinite groups are locally nilpotent.
\end{abstract}
\maketitle

\section{Introduction}

A group $G$ is called an Engel group if for every $x,g\in G$ the equation $[x,g,g,\dots , g]=1$ holds, where $g$ is repeated in the commutator sufficiently many times depending on $x$ and $g$. (Throughout the paper, we use the left-normed simple commutator notation
$[a_1,a_2,a_3,\dots ,a_r]=[...[[a_1,a_2],a_3],\dots ,a_r]$.) A group is said to be locally nilpotent if every finite subset generates a nilpotent subgroup. Clearly, any locally nilpotent group is an Engel group. Wilson and Zelmanov \cite{wi-ze} proved the converse for profinite groups: any Engel profinite group is locally nilpotent. Later Medvedev \cite{med} extended this result to Engel compact (Hausdorff) groups.

 In this paper we consider {almost Engel} groups in the following precise sense.

 \begin{definition} \label{d}
 We  say that a group $G$ is \emph{almost Engel} if for every $g\in  G$ there is a \emph{finite} set ${\mathscr E}(g)$ such that for every $x\in G$ all sufficiently long commutators $[x,g,g,\dots ,g]$ belong to ${\mathscr E}(g)$, that is, for every $x\in G$ there is a positive integer $n(x,g)$ such that
 $$[x,\underbrace{g,g,\dots ,g}_n]\in {\mathscr E}(g)\qquad \text{for all }n\geq n(x,g).
 $$
 \end{definition}
\noindent Thus, Engel groups are precisely the almost Engel groups for which we can choose ${\mathscr E}(g)=\{ 1\}$ for all $g\in G$.

We prove that almost Engel compact groups are finite-by-(locally nilpotent). By a compact group we mean a compact Hausdorff topological group.

\begin{theorem}\label{t-e}
Suppose that $G$ is an almost Engel compact group.
Then $G$ has a finite normal subgroup $N$ such that $G/N$ is locally nilpotent.
\end{theorem}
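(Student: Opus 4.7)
The plan is to extract, for each $g\in G$, a canonical finite $\phi_g$-invariant piece of $\mathcal{E}(g)$, collect these into one finite normal subgroup $N\trianglelefteq G$, and then apply the Wilson--Zelmanov theorem (in the profinite part) and Medvedev's theorem (in the connected part) to the resulting Engel quotient $G/N$. For each $g\in G$ I would consider the continuous self-map $\phi_g\colon G\to G$, $\phi_g(x)=[x,g]$, and its eventual image
\[
D(g):=\bigcap_{n\ge 1}\phi_g^n(G),
\]
which is closed and satisfies $\phi_g(D(g))=D(g)$ by a standard compactness argument. The first key step is to show that $D(g)$ is finite, that $D(g)\subseteq\mathcal{E}(g)$, and that every forward $\phi_g$-orbit in $G$ eventually enters $D(g)$. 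For this I would apply the Baire category theorem to the closed cover $G=\bigcup_{k\ge 1}A_k$ with $A_k=\{x\in G:\phi_g^n(x)\in\mathcal{E}(g)\text{ for all }n\ge k\}$: some $A_k$ contains an open set $U$, on which $\phi_g^k$ then takes only finitely many values, and this rigidity is then propagated to all of $G$ using compactness to conclude that $D(g)$ is finite. The conjugation equivariance $D(hgh^{-1})=hD(g)h^{-1}$ is immediate from the definition.

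With the canonical finite sets $D(g)$ in hand, I would split $G$ via the structure theorem into its identity component $G_0$ and the profinite quotient $G/G_0$, and handle the two separately. For the connected case, a connected compact locally nilpotent group is an inverse limit of connected compact nilpotent Lie groups, each of which is a torus and hence abelian; so it suffices to prove that $G_0$ is Engel and then invoke Medvedev's theorem. I would argue that every element of $G_0$ lies in a maximal torus, hence has roots of every order, and this divisibility, combined with the finiteness and $\phi_g$-invariance of $D(g)$, should force $D(g)=\{1\}$ for every $g\in G_0$.

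The bulk of the work, and the main obstacle, is the profinite case. Here the plan is to show that the subgroup
\[
N:=\langle D(g):g\in G\rangle,
\]
which is normal in $G$ by the equivariance above, is in fact finite. Once this is established, the quotient $G/N$ has $D(\bar g)=\{1\}$ for every $\bar g$, hence is Engel, so Wilson--Zelmanov delivers local nilpotency; combining with the connected case yields the theorem for general compact $G$. The difficulty is that the individual $D(g)$'s are finite but with no uniform bound on their sizes, so a direct counting argument is hopeless. My proposed route is to first show that every element of every $D(g)$ has finite conjugacy class in $G$, placing $N$ inside the FC-centre of $G$, and then to apply a BFC-type argument in the spirit of B.~H.~Neumann or Hartley to bound the derived subgroup of $N$ and conclude that $N$ itself is finite.
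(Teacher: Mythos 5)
Your overall skeleton (split off the identity component $G_0$, use divisibility to kill the sinks in the connected part, reduce to the profinite quotient, and take $N$ to be the subgroup generated by all Engel sinks so that $G/N$ is Engel and Wilson--Zelmanov applies) matches the paper's strategy, and your $D(g)$ is just the minimal sink ${\mathscr E}(g)$ again, so that preliminary construction costs you nothing but also buys you nothing: the sink is already finite by hypothesis and already conjugation-equivariant. The two places where the theorem actually lives are exactly the two places your proposal is not a proof.

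First, the profinite case. Your plan is to show $N=\langle D(g)\mid g\in G\rangle$ is finite by placing it in the FC-centre and running a BFC-type argument. There is no reason offered why an element of some $D(g)$ should have finite conjugacy class in $G$, and even granting that, Neumann-type results bound the \emph{derived subgroup} of an FC- or BFC-group; they do not make the group finite ($N$ could perfectly well be infinite abelian, generated by infinitely many finite sinks of unbounded size). The paper's route is genuinely heavier: a quantitative theorem for finite groups (if all sinks have size at most $m$ then $|\gamma_{\infty}(G)|$ is $m$-bounded), whose proof uses Hall--Higman, the classification of finite simple groups, Gr\"un's lemma and Schur multipliers; then a Baire category argument applied not to a single $g$ but to the sets $E_k=\{x:|{\mathscr E}(x)|\le k\}$, producing a coset $aU$ with a \emph{uniform} bound on sink sizes; a metabelian trick to transfer that bound from the coset to the open subgroup $U$; and finally an induction on $|G/F(G)|$. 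Your proposal contains no mechanism for obtaining any uniform bound, which is the crux.

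Second, the gluing. Knowing that $G_0$ is (locally nilpotent, in fact abelian) and that $G/G_0$ is finite-by-(locally nilpotent) does not "combine" to give the conclusion for $G$: an extension of a locally nilpotent group by a finite-by-(locally nilpotent) group need not be finite-by-(locally nilpotent). The paper needs (i) ${\mathscr E}(g)\cap G_0=1$ for \emph{all} $g\in G$, not just $g\in G_0$ (proved by extracting $p^k$-th roots in $G_0$ to manufacture sink elements of unbounded order); (ii) that the finite subgroup $E\le G/G_0$ generated by the sinks acts trivially on $G_0$ (a separate argument on the torsion part and on $G_0$ modulo torsion); (iii) Schur's theorem to make the preimage $F$ of $E$ abelian modulo a finite subgroup, a complement $C$ to $G_0$ in $F$, and then a \emph{second} application of the profinite theorem to the semidirect product $M\rtimes(G/C_G(M))$ where $M$ is the normal closure of $C$. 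None of this is routine, and your proposal does not address it.
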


In Theorem~\ref{t-e} it also follows that there is a locally nilpotent subgroup of finite index -- just consider $C_G( N)$.  The proof uses the aforementioned Wilson--Zelmanov theorem for profinite groups. First the case of a finite group $G$ is considered, where obviously the result  must be quantitative:  namely, given a uniform bound $ |{\mathscr E}(g)|\leq m$ for the cardinalities of the sets ${\mathscr E}(g)$ in the above definition, we prove that the order of the nilpotent residual $\gamma _{\infty}(G)=\bigcap _i\gamma _i(G)$ is bounded in terms of $m$ only. Then Theorem~\ref{t-e} is proved for profinite groups. Finally, the result for compact groups is derived with the use of the structure theorems for compact groups.

As in the case of finite groups, if there is a uniform bound $m$ for the cardinalities of the sets ${\mathscr E}(g)$ in  Theorem~\ref{t-e}, then the subgroup $N$ in the conclusion can be chosen to be of order bounded in terms of $m$ (Corollary~\ref{c-em}).

In an earlier paper \cite{khu-shu153} we obtained similar results about finite and profinite groups with a stronger condition of ``almost Engel'' type. That condition means that every element $g$ of the group is ``almost $n$-Engel'' for some $n=n(g)$ depending on $g$. Note, however, that Engel groups do not necessarily satisfy that condition. The new Definition~\ref{d} of almost Engel groups in the present paper imposes a weaker and more natural ``almost Engel'' condition and includes Engel groups (when all the subsets ${\mathscr E}(g)$ consist only of 1). Thus, the results of the present paper are stronger than in \cite{khu-shu153} even for (pro)finite groups, and  cover a wider class of compact groups.

First in \S\,\ref{s-1} we collect some elementary properties of minimal subsets ${\mathscr E}(g)$ in the definition of almost Engel groups. We deal with finite groups in \S\,\ref{s-f}, with profinite groups in \S\,\ref{s-pf}, and consider the general case of compact groups in \S\,\ref{s-comp}.

Our notation and terminology is standard; for profinite groups, see, for example, \cite{wilson}.

We say for short that an element $g$ of a group $G$ is an \textit{Engel element} if for any $x\in G$ we have $[x,g,g,\dots , g]=1$, where $g$ is repeated in the commutator sufficiently many times depending on $x$ (such elements $g$ are often called left Engel elements).

A subgroup (topologically) generated by a subset $S$ is denoted by $\langle S\rangle$. For a group $A$ acting by automorphisms on a group $B$ we use the usual notation for commutators $[b,a]=b^{-1}b^a$ and $[B,A]=\langle [b,a]\mid b\in B,\;a\in A\rangle$, and for centralizers $C_B(A)=\{b\in B\mid b^a=b \text{ for all }a\in A\}$ and $C_A(B)=\{a\in A\mid b^a=b\text{ for all }b\in B\}$.

Throughout the paper we shall write, say, ``$(a,b,\dots )$-bounded'' to abbreviate
``bounded above in terms of $a, b,\dots $ only''.

\section{Properties of Engel sinks}
\label{s-1}

Throughout this section we assume that $G$ is an almost Engel group in the sense of Definition~\ref{d}, so that for every $g\in  G$ there is a finite set ${\mathscr E}(g)$ such that for every $x\in G$
there is a positive integer $n(x,g)$ such that
\begin{equation} \label{e-def}
[x,\underbrace{g,g,\dots ,g}_n]\in {\mathscr E}(g)\qquad \text{for any }n\geq n(x,g).
\end{equation}
If ${\mathscr E}'(g)$ is another finite set with the same property for possibly different numbers $n'(x,g)$, then ${\mathscr E}(g)\cap {\mathscr E}'(g)$ also satisfies the same condition with the numbers $n''(x,g)=\max\{n(x,g),n'(x,g)\}$. Hence for every $g\in G$ there is a \emph{minimal} set satisfying the definition, which we again denote by ${\mathscr E}(g)$ and call the \emph{Engel sink for $g$}, or simply \emph{$g$-sink} for short. \emph{Henceforth we shall always use the notation ${\mathscr E}(g)$ to denote the (minimal) Engel sinks, and $n(x,g)$ the corresponding numbers satisfying \eqref{e-def}.}

For a fixed $g\in G$, consider the mapping of ${\mathscr E}(g)$ by the rule $z\to [z,g]$, which maps ${\mathscr E}(g)$  into itself by definition. By the minimality of ${\mathscr E}(g)$ this mapping is a permutation of ${\mathscr E}(g)$. Therefore we can speak of orbits (cycles)  of this permutation on ${\mathscr E}(g)$. It follows that every $z\in {\mathscr E}(g)$ can be represented in the form
\begin{equation}\label{e-orb1}
z=[z,\underbrace{g,\dots,g}_{k}]\qquad \text{for some }k\geq 1,
\end{equation}
and therefore also as
\begin{equation}\label{e-orb2}
z=[z,\underbrace{g,\dots,g}_{jk}]\qquad \text{for any positive integer }j.
\end{equation}
Conversely, elements satisfying \eqref{e-orb1} belong to ${\mathscr E}(g)$. We have thus proved the following.

\begin{lemma}\label{l-sink}
For any $g\in G$ the $g$-sink ${\mathscr E}(g)$ consists precisely of all elements $z$ such that $z=[z,{g,\dots,g}]$, where $g$ occurs at least once.
\end{lemma}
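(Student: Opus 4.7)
The lemma essentially packages the observations already made in the paragraphs preceding it, so the plan is just to articulate them carefully via the minimality of ${\mathscr E}(g)$. Let $\varphi \colon G \to G$ denote the map $z \mapsto [z,g]$, so that the condition \eqref{e-def} reads $\varphi^n(x) \in {\mathscr E}(g)$ for all $n \geq n(x,g)$, and the conclusion to be proved becomes: ${\mathscr E}(g) = \{z \in G : z = \varphi^k(z) \text{ for some } k \geq 1\}$.

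First I would verify the two minimality assertions that the text takes for granted. For the claim that $\varphi$ maps ${\mathscr E}(g)$ into itself, set
\[
{\mathscr E}' = \{ z \in {\mathscr E}(g) : \varphi^n(z) \in {\mathscr E}(g) \text{ for all } n \geq 0\}.
\]
Given any $x \in G$, the element $\varphi^{n(x,g)}(x)$ lies in ${\mathscr E}(g)$ and all its further $\varphi$-iterates stay in ${\mathscr E}(g)$, so $\varphi^{n(x,g)}(x) \in {\mathscr E}'$. Thus ${\mathscr E}'$ is itself a finite set witnessing the definition of almost Engel, and minimality of ${\mathscr E}(g)$ forces ${\mathscr E}' = {\mathscr E}(g)$. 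For surjectivity of $\varphi$ on ${\mathscr E}(g)$, the image $\varphi({\mathscr E}(g))$ is contained in ${\mathscr E}(g)$ by the previous step, and it also satisfies the defining condition with the shifted bounds $n(x,g)+1$; minimality again yields $\varphi({\mathscr E}(g)) = {\mathscr E}(g)$. Since ${\mathscr E}(g)$ is finite, $\varphi$ is a permutation of it.

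Next, given $z \in {\mathscr E}(g)$, since $z$ lies on some $\varphi$-cycle of length $k \geq 1$, the identity $z = \varphi^k(z)$, i.e.\ \eqref{e-orb1}, holds; iterating gives \eqref{e-orb2}. Conversely, if an arbitrary element $z \in G$ satisfies $z = \varphi^k(z)$ for some $k \geq 1$, then $z = \varphi^{jk}(z)$ for every positive integer $j$; choosing $j$ so that $jk \geq n(z,g)$ makes the right-hand side lie in ${\mathscr E}(g)$ by \eqref{e-def}, hence $z \in {\mathscr E}(g)$. Combining these two directions gives the characterization in the lemma.

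There is no real obstacle here: the only subtle point is that one should not conflate ``$\varphi$-iterates eventually land in ${\mathscr E}(g)$'' with ``${\mathscr E}(g)$ is $\varphi$-invariant'', so the two minimality arguments in the first step must be carried out explicitly rather than asserted. Once $\varphi$ is known to permute ${\mathscr E}(g)$, everything else is immediate.
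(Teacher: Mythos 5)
Your proof is correct and follows essentially the same route as the paper, which establishes the lemma in the paragraph preceding its statement by observing that $z\mapsto[z,g]$ permutes ${\mathscr E}(g)$ and then reading off the orbit identities \eqref{e-orb1}--\eqref{e-orb2} together with the converse. The only difference is that you spell out the two minimality arguments (invariance and surjectivity of the map on ${\mathscr E}(g)$) that the paper asserts without detail, and you do so correctly.
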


Clearly, every subgroup $H$ of $G$ is also an almost Engel group. Moreover,  by Lemma~\ref{l-sink},  for $h\in H$ the $h$-sink constructed within $H$ is precisely the subset ${\mathscr E}(h)\cap H$ of  the $h$-sink ${\mathscr E}(h)$ in $G$.
If $N$ is a normal subgroup of $G$, then $G/N$ is also an almost Engel group. For $\bar g=gN$ the $\bar g$-sink in $G/N$ is the image of  ${\mathscr E}(g)$ in this quotient group. These properties will be used throughout the paper without special references.

For any element $h$ of the centralizer $C_G(g)$ the equation $z=[z,g, \dots, g]$ implies    $z^h=[z^h,g^h, \dots, g^h]=[z^h,g, \dots, g]$. Hence ${\mathscr E}(g)$ is invariant under conjugation by $h$ by Lemma~\ref{l-sink}. If $|{\mathscr E}(g)|=m$, it follows that $h^{m!}$ centralizes ${\mathscr E}(g)$. We have thus proved the following.

\begin{lemma}\label{l-c-s}
If $h\in C_G(g)$ and $|{\mathscr E}(g)|=m$, then $h^{m!}$ centralizes ${\mathscr E}(g)$.
\end{lemma}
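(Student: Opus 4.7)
The plan is to show that conjugation by $h$ permutes the sink $\mathscr{E}(g)$, and then invoke Lagrange on the finite symmetric group $\mathrm{Sym}(\mathscr{E}(g))$. All of the real work is already packaged inside Lemma~\ref{l-sink}, so this should be short.

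First I would fix $z\in \mathscr{E}(g)$ and use Lemma~\ref{l-sink} to write $z=[z,g,\dots ,g]$ with $g$ occurring at least once. Since $h\in C_G(g)$, we have $g^h=g$, so conjugating the identity by $h$ (and using that conjugation is an automorphism distributing over commutators) yields $z^h=[z^h,g,\dots ,g]$ with the same number of $g$'s. Applying Lemma~\ref{l-sink} in the reverse direction, this forces $z^h\in \mathscr{E}(g)$.

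Consequently, the map $z\mapsto z^h$ restricts to a well-defined map $\mathscr{E}(g)\to \mathscr{E}(g)$; since conjugation by $h^{-1}$ gives a two-sided inverse on $G$ and likewise preserves $\mathscr{E}(g)$ by the same argument, the restricted map is a bijection, i.e., an element of $\mathrm{Sym}(\mathscr{E}(g))$. Since $|\mathscr{E}(g)|=m$, the order of this permutation divides $m!$, so raising $h$ to the power $m!$ gives the identity permutation on $\mathscr{E}(g)$. Hence $h^{m!}$ centralizes $\mathscr{E}(g)$, as claimed.

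There is no real obstacle here: the only subtle point is verifying that the restriction of conjugation by $h$ to $\mathscr{E}(g)$ is genuinely a permutation and not merely a self-map, but this follows symmetrically from $h^{-1}\in C_G(g)$ (or, alternatively, from finiteness of $\mathscr{E}(g)$ combined with injectivity of conjugation on $G$). Everything else is a one-line bookkeeping application of the characterization of $\mathscr{E}(g)$ provided by Lemma~\ref{l-sink}.
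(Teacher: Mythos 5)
Your argument is correct and coincides with the paper's own proof: both show via Lemma~\ref{l-sink} that conjugation by $h\in C_G(g)$ maps ${\mathscr E}(g)$ into itself, hence induces a permutation of the $m$-element set ${\mathscr E}(g)$ whose order divides $m!$, so that $h^{m!}$ centralizes ${\mathscr E}(g)$. Your extra remark on why the self-map is a bijection is a harmless elaboration of the same reasoning.
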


Engel sinks have especially nice properties in metabelian groups. We denote by $M'$ the derived subgroup of a group $M$.

\begin{lemma}\label{l-metab} Let $M$ be a metabelian almost Engel group.

{\rm (a)} Every sink ${\mathscr E}(g)$ is a normal subgroup contained in $M'$.

{\rm (b)} Elements of ${\mathscr E}(g)$ in the same orbit under the map $z\to [z,g]$ have the same order.
\end{lemma}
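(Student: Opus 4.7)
\medskip
\noindent \textbf{Proof plan.} The strategy is to view $M'$ as a module on which $g$ acts by conjugation, written additively; the map $z\mapsto [z,g]=z^{-1}z^g$ then becomes multiplication by $g-1$, so the iterated commutator $[z,\underbrace{g,\dots,g}_k]$ corresponds to $(g-1)^k z$. By Lemma~\ref{l-sink},
\[
{\mathscr E}(g)=\{\,z\in M'\mid (g-1)^k z=z\text{ for some }k\geq 1\,\}.
\]
The inclusion ${\mathscr E}(g)\subseteq M'$ is immediate from Lemma~\ref{l-sink}, since every element of ${\mathscr E}(g)$ is a commutator.

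For the subgroup property in part~(a), given $z_1,z_2\in {\mathscr E}(g)$ with $z_i=(g-1)^{k_i}z_i$, I would invoke \eqref{e-orb2} to replace both $k_i$ by the common exponent $k=k_1k_2$; then additivity of $(g-1)^k$ on the abelian group $M'$ gives $(g-1)^k(z_1-z_2)=z_1-z_2$, so ${\mathscr E}(g)$ is closed under products and inverses. For normality in $M$, I must show $z^h\in{\mathscr E}(g)$ for any $h\in M$. Conjugating the defining identity $z=[z,g,\dots,g]$ yields $z^h=[z^h,g^h,\dots,g^h]$, and writing $g^h=g\cdot [g,h]$ with $[g,h]\in M'$, the standard commutator identity $[u,gm]=[u,m]\,[u,g]^m$, together with $M'$ being abelian (so $[u,m]=1$ and $[u,g]^m=[u,g]$ whenever $u,m\in M'$), shows $[u,g^h]=[u,g]$ for all $u\in M'$. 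Iterating with $u$ the successive inner commutators (all in $M'$) gives $z^h=[z^h,g,\dots,g]$, and Lemma~\ref{l-sink} places $z^h$ in ${\mathscr E}(g)$.

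For part~(b), the map $\phi:y\mapsto [y,g]$ is an endomorphism of the abelian group $M'$, so the order of $\phi(y)$ divides the order of $y$. Since the orbit of $z\in{\mathscr E}(g)$ under $\phi$ is a finite cycle returning to $z$ after, say, $k$ steps, the resulting chain of divisibilities among the orders of $z,\phi(z),\dots,\phi^{k-1}(z),\phi^k(z)=z$ collapses to equality throughout.

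The only slightly delicate point is the normality calculation in (a): it is essential that $[g,h]\in M'$ and that $M'$ is abelian, so that substituting $g^h$ by $g$ inside commutators whose first entry lies in $M'$ does not change the value. Everything else is routine bookkeeping once ${\mathscr E}(g)$ has been identified with the union of the $(g-1)$-periodic elements of the $\mathbb Z[\langle g\rangle]$-module $M'$.
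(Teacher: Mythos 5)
Your proof is correct and follows essentially the same route as the paper: the module-theoretic reformulation of the iterated commutator as $(g-1)^k z$ is exactly the paper's appeal to the ``standard metabelian laws'' (common exponent plus additivity on $M'$ for the subgroup claim), and part (b) is verbatim the paper's argument that $y\mapsto[y,g]$ is an endomorphism of $M'$ whose cycle structure forces equal orders. The only cosmetic difference is in the normality step: the paper shows $[z,m]\in{\mathscr E}(g)$ via the identity $[u,a,b]=[u,b,a]$ for $u\in M'$, whereas you show $z^h\in{\mathscr E}(g)$ directly from $[u,g^h]=[u,g]$ for $u\in M'$ together with Lemma~\ref{l-sink}; both are equally valid one-line consequences of metabelianness.
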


\begin{proof}  (a) By \eqref{e-orb1} and \eqref{e-orb2}, every element  $z\in {\mathscr E}(g)$ can be represented as $z=[z,g,\dots,g]$ with $g$ repeated $jk(z)$ times for $k(z)\geq 1$ and for every $j=1,2,\dots$. In particular,  ${\mathscr E}(g)\subseteq M'$. For $z_1,z_2\in {\mathscr E}(g)$ choose $j$ such that $jk(z_1)k(z_2)$ is larger than $n(z_1z_2,g)$. Then, by the standard metabelian laws,
$$
\begin{aligned}z_1z_2&=[z_1,{g,\dots,g}][z_2,{g,\dots,g}]\\
&= [z_1z_2,{g,\dots,g}]\in {\mathscr E}(g),\end{aligned}
$$
where $g$ is repeated ${jk(z_1)k(z_2)}$ times in each commutator.
Thus, the finite set ${\mathscr E}(g)$ is a subgroup.
For any $m\in M$, choose $jk(z)$ larger than $n([z,m],g)$. Then $[z,m]=[[z,g,\dots,g],m]=[[z,m],g,\dots,g]\in {\mathscr E}(g)$, where $g$ is repeated ${jk(z)}$ times in each commutator; this means that ${\mathscr E}(g)$ is a normal subgroup of $M$.

(b) Let $z\in {\mathscr E}(g)$. Since $z\in M'$, we have $[z^k,g]=[z,g]^k$ for any integer $k$. Hence the order of $[z,g]$ divides the order of $z$. Going in this way over the orbit, we return to $z$, which implies that the orders of all elements in the orbit are the same.
\end{proof}

\section{Finite almost Engel groups}
\label{s-f}

Of course, any finite group $G$ is almost Engel, and every element $g\in G$ has  finite (minimal) $g$-sink ${\mathscr E}(g)$. A meaningful result must be of quantitative nature, and this is what we prove in this section. The following theorem will also be used in the proof of the main results on profinite and compact groups.

\begin{theorem}\label{t-finite}
Let $G$ be a finite group, and $m$  a positive integer. Suppose that for every $g\in  G$ the cardinality of the $g$-sink ${\mathscr E}(g)$ is at most $m$.
 Then $G$ has a normal subgroup $N$ of order bounded in terms of $m$ such that $G/N$ is nilpotent.
\end{theorem}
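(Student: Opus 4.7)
I would aim to bound the order of the nilpotent residual $R := \gamma_\infty(G)$ in terms of $m$; the conclusion then follows with $N := R$. The starting observation is that $R$ is generated by the Engel sinks: by Lemma~\ref{l-sink} every $z \in \mathscr{E}(g)$ equals $[z, g, \dots, g]$ with $g$ repeated any multiple of the orbit length, so $z \in \bigcap_k \gamma_k(G) = R$; conversely, the subgroup $L := \langle \mathscr{E}(g) \mid g \in G\rangle$ is normal in $G$ (since $\mathscr{E}(g)^h = \mathscr{E}(g^h)$), and the quotient $G/L$ has all sinks trivial, hence is a finite Engel group, and so nilpotent by Zorn's theorem, giving $R \le L$. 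Thus $R = L$, and the task is to bound $|L|$ in terms of $m$.

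Next I would reduce to the case where $G$ is soluble. The idea is that in a large nonabelian simple group $S$, the hypothesis $|\mathscr{E}(g)| \le m$ is very restrictive: Lemma~\ref{l-c-s} forces $g^{m!}$ to centralize $\mathscr{E}(g)$ for every $g$, and one aims to exhibit (using CFSG) elements $g$ and $x$ in a sufficiently large simple group whose iterates $[x,g,\dots,g]$ fall into a $\phi_g$-orbit of size exceeding $m$. Combined with standard reductions through the nonsoluble chief factors of $R$, this $m$-bounds the contribution of nonabelian composition factors of $R$ and reduces the problem to soluble $G$.

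In the soluble case I would induct on the derived length. The metabelian base case uses Lemma~\ref{l-metab}, which gives that each sink is already a subgroup of order at most $m$ inside $G'$; one then bounds the number of distinct sinks by invoking Lemma~\ref{l-c-s}, which implies that $\mathscr{E}(g)$ is determined by $g$ only modulo a centralizer of $m$-bounded index in the relevant section. For the inductive step, pass to $G/A$ where $A$ is the last nontrivial term of the derived series: induction handles the image of $R$ in $G/A$, and since $A$ is abelian the remaining piece $R \cap A$ can be bounded from the sinks of the action of $G$ on $A$, using Lemma~\ref{l-metab} applied to subgroups of the form $A\langle g\rangle$.

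The main obstacle, I expect, is the combinatorial bookkeeping in the soluble inductive step: turning the local information (each individual sink has size $\le m$ and is centralized by a subgroup of bounded index) into a global bound on $|R|$ requires uniformly controlling the number of essentially distinct sinks. Since Lemma~\ref{l-c-s} only pins down $g^{m!}$ rather than $g$ itself, identifying which elements produce the same sink — and hence reducing an a priori unbounded generating set to a bounded one — is the delicate step. The CFSG-based reduction to the soluble case is also nontrivial, as it requires uniformly exhibiting large $\phi_g$-orbits in all sufficiently large nonabelian simple groups.
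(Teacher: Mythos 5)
There are genuine gaps at both of the places you yourself flag as delicate, and the mechanisms you propose there do not work as stated. In the soluble case, your key tool for ``controlling the number of essentially distinct sinks'' is Lemma~\ref{l-c-s}, but that lemma only says that an element $h\in C_G(g)$ has $h^{m!}$ centralizing ${\mathscr E}(g)$; it does not imply that ${\mathscr E}(g)$ is constant on cosets of a subgroup of bounded index, and nothing in the paper's toolkit gives that. Moreover, induction on derived length runs into a coprimality problem: if $g$ is a $p$-element acting on an abelian $p$-group $A$, the sink of $g$ in $A\langle g\rangle$ can be trivial while $[A,g]$ is arbitrarily large, so bounding $R\cap A$ ``from the sinks of the action on $A$'' fails. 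The paper avoids both issues by a different route: Lemma~\ref{l-c-s} is used only to show (via Baer's theorem) that $g^{m!}\in F(G)$, so $G/F(G)$ has exponent at most $m!$ (Lemma~\ref{l3}); Hall--Higman then bounds the \emph{Fitting height}, and the induction is on Fitting height, with the key metanilpotent step written as $\gamma_\infty(F_2(G))=\prod_q[F_q,H_{q'}]$ so that only \emph{coprime} actions occur. There Lemma~\ref{l0} bounds each $|[F_q,h]|$ by $|{\mathscr E}(h)|$, and Lemma~\ref{l2} is precisely the lemma that converts ``each $|[V,u]|\le m$'' into a bound on $|[V,U]|$ --- this is the ingredient that replaces your (unavailable) count of distinct sinks.

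In the nonsoluble reduction the gap is larger than you acknowledge. The bounded order of the nonabelian composition factors comes for free from the exponent bound on $G/F(G)$ together with CFSG (simple groups of bounded exponent have bounded order) and Shmidt's theorem for the number of factors, so no construction of long $\phi_g$-orbits is needed. But the real work is not bounding $|G/R(G)|$: even with $|G/F(G)|$ bounded and $F(G)$ nilpotent, $\gamma_\infty(G)$ can a priori be huge (think of a bounded simple group acting fixed-point-freely on a large abelian group), so the theorem does not follow from the soluble case by ``standard reductions through the chief factors''. The paper handles the case $G/F(G)$ nonabelian simple by first factoring out a bounded normal subgroup $K$ that makes every $F(G)\langle t_i\rangle$ nilpotent, deducing $[F(G),G,\dots,G]=1$, and then identifying $D=\gamma_\infty(G)$ as a perfect central covering of $G/F(G)$ via Gr\"un's lemma, so that Schur's theorem on the finiteness of the Schur multiplier bounds $|D|$. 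None of this is present in your plan. Your opening observation that $\gamma_\infty(G)=\langle {\mathscr E}(g)\mid g\in G\rangle$ is correct and clean, but it does not by itself advance the bound, since the number of distinct sinks is exactly what you cannot control directly.
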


The conclusion of the theorem can also be stated as a bound in terms of $m$ for the order of the nilpotent residual subgroup $\gamma _{\infty}(G)$, the intersection of all terms of the lower central series (which for a finite group is of course also equal to some subgroup $\gamma _n(G)$).

First we recall or prove a few preliminary results. We shall use the following  well-known properties of coprime actions: if $\alpha $ is an automorphism of a finite
group $G$ of coprime order, $(|\alpha |,|G|)=1$, then $ C_{G/N}(\alpha )=C_G(\alpha )N/N$ for any $\alpha $-invariant normal subgroup $N$, the equality $[G,\alpha ]=[[G,\alpha ],\alpha ]$ holds, and if $G$ is in addition abelian, then $G=[G,\alpha ]\times C_G(\alpha )$.

\begin{lemma}\label{l0}
Let  $P$ be a finite $p$-subgroup of a group $G$, and $g\in G$ a $p'$-element normalizing $P$. Then the order of  $[P,g]$ is bounded in terms of the cardinality of the $g$-sink ${\mathscr E}(g)$.
\end{lemma}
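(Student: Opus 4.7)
The plan is to show, more strongly, that $[P,g]$ is actually contained in the sink $\mathscr{E}(g)$, which immediately gives $|[P,g]|\leq m$ where $m=|\mathscr{E}(g)|$. The key tool is the coprime action of $g$ on $P$.

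First I would invoke the standard coprime commutator identity: since $g$ is a $p'$-element acting on the finite $p$-group $P$, the image of $\langle g\rangle$ in $\mathrm{Aut}(P)$ has order coprime to $|P|$, and therefore $[P,g]=[[P,g],g]$. Consider the self-map $\phi\colon P\to P$, $x\mapsto [x,g]$, used implicitly throughout \S\ref{s-1}. The identity just quoted says exactly that $\phi$ maps $[P,g]$ onto $[P,g]$; since $[P,g]$ is finite, this restriction is a bijection, hence a permutation.

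Now I would read off the conclusion from Lemma~\ref{l-sink}. Each element $y\in[P,g]$ lies in some cycle of $\phi\big|_{[P,g]}$ of length $k\geq 1$, so
\[
y=\underbrace{\phi\circ\cdots\circ\phi}_{k}(y)=[y,\underbrace{g,\dots,g}_{k}].
\]
By Lemma~\ref{l-sink}, this forces $y\in\mathscr{E}(g)$. Therefore $[P,g]\subseteq\mathscr{E}(g)$, and hence $|[P,g]|\leq |\mathscr{E}(g)|=m$.

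There is essentially no obstacle here beyond checking that the coprime hypothesis is in force: $g$ being a $p'$-element guarantees that its induced automorphism of $P$ has $p'$-order (replacing $g$ by a suitable power if one wishes to be pedantic about whether $g$ itself has infinite order, since only the action on $P$ matters). Once $[P,g]=[[P,g],g]$ is in hand, the rest is the soft dynamical observation that a surjective endomorphism of a finite set is a permutation, combined with the characterization of sinks from Lemma~\ref{l-sink}.
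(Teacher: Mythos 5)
There is a genuine gap at the central step. From the coprime-action identity $[P,g]=[[P,g],g]$ you conclude that the map $\phi\colon x\mapsto[x,g]$ is surjective on $[P,g]$. But $[[P,g],g]$ denotes the \emph{subgroup generated by} the commutators $[x,g]$ with $x\in[P,g]$, not the set $\{[x,g]\mid x\in[P,g]\}$ itself; when $[P,g]$ is non-abelian these need not coincide, and the identity only says that the commutators generate $[P,g]$. Since $[P,g]$ is finite, $\phi$ restricted to $[P,g]$ is surjective if and only if it is injective, i.e.\ if and only if $C_{[P,g]}(g)=1$, and this can fail. Concretely, let $P$ be extraspecial of order $p^{3}$ and exponent $p$ ($p$ odd) and let $g$ be a coprime automorphism acting with eigenvalues $\lambda,\lambda^{-1}$ (of order $q\mid p-1$) on $P/Z(P)$ and trivially on $Z(P)$; then $[P,g]=P$ while $C_P(g)\supseteq Z(P)\neq 1$, so $\phi$ is not a permutation of $[P,g]$. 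Moreover, in this example the stronger statement you aim for is simply false: a non-trivial $z\in Z(P)$ satisfies $[z,g]=1$, hence $[z,g,\dots,g]=1\neq z$, so $z\notin\mathscr{E}(g)$ by Lemma~\ref{l-sink}, and therefore $[P,g]\not\subseteq\mathscr{E}(g)$.

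Your mechanism does work after abelianizing, and that is exactly how the paper proceeds: for $V=[P,g]/[P,g]'$ the map $v\mapsto[v,g]$ is an endomorphism of the abelian group $V$ with $V=[V,g]$ and $C_V(g)=1$ by coprimality, hence a bijection of $V$, which yields $V=\{[v,g,\dots,g]\mid v\in V\}$ for every length and so $|V|\leq|\mathscr{E}(g)|$. But this only bounds $|[P,g]/[P,g]'|$, so a second ingredient is indispensable for bounding $|[P,g]|$ itself: a bound on the nilpotency class of $[P,g]$ in terms of $|\mathscr{E}(g)|$. The paper gets the crude bound $2|\mathscr{E}(g)|+1$ by showing that each lower central factor of $[P,g]$ on which $g$ acts non-trivially contributes an element to $\mathscr{E}(g)$, and that $g$ cannot act trivially on two consecutive non-trivial factors (Three Subgroup Lemma). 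Your proposal is missing both the passage to the abelianization and this class-bounding step.
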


\begin{proof}
For the abelian $p$-group $V=[P,g]/[P,g]'$ we have $V=[V,g]$ and $C_V(g)=1$ because the action of $g$ on $V$ is coprime. Then $V=\{[v,g]\mid v\in V\}$ and therefore also
$$
V=\{[v,\underbrace{g,\dots ,g}_n\,]\mid v\in V\}
$$
for any $n$. Hence, $V$ is contained in the image of ${\mathscr E}(g)\cap [P,g]$ in $[P,g]/[P,g]'$, whence $|V|\leq |{\mathscr E}(g)|$.

Since $[P,g]$ is a nilpotent group, its order is bounded in terms of  $|[P,g]/[P,g]'|$ and its nilpotency class. We claim that, as a crude bound,  the nilpotency class of $[P,g]$ is at most  $2|{\mathscr E}(g)|+1$. Let $\gamma_i$ denote the terms of the lower central series of $[P,g]$. The number of factors of the lower central series of $[P,g]$ on which $g$ acts nontrivially is at most $|{\mathscr E}(g)|$, because for any such a factor $U=\gamma _i/\gamma _{i+1}$ we have
$$
1\ne  [U,g]=\{[u, \underbrace{g,\dots ,g}_n\,]\mid u\in U\}
$$
for any $n$, since the action of $g$ on $U$ is coprime, and therefore there  is an element of ${\mathscr E}(g)$  in $\gamma_i\setminus \gamma _{i+1}$. It remains to observe that $g$ cannot act trivially on two consecutive nontrivial factors of the lower central series of $[P,g]$. Indeed, if
$[\gamma_i, g]\leq \gamma _{i+1}$ and $[\gamma_{i+1}, g]\leq \gamma _{i+2}$, then by the Three Subgroup Lemma the inclusions $[\gamma_i, g, [P,g]]\leq [\gamma _{i+1}, [P,g]]=\gamma _{i+2}$ and $[[P,g],\gamma_{i}, g]=[ \gamma _{i+1},g]\leq \gamma _{i+2}$ imply the inclusion $[g, [P,g],\gamma _i]=[[P,g],\gamma _i]=\gamma _{i+1}\leq \gamma _{i+2}$, and the last inclusion implies that $\gamma_{i+1}=1$.
\end{proof}

The following lemma already appeared in \cite{khu-shu153}, but we reproduce the proof for the benefit of the reader.

\begin{lemma}\label{l2}
Let $V$ be an elementary abelian $q$-group, and $U$ a $q'$-group of
automorphisms of $V$. If $|[V,u]|\leq m$ for every $u\in U$, then $|[V,U]|$ is $m$-bounded, and therefore $|U|$ is also $m$-bounded.
\end{lemma}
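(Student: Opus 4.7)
The plan is to use the coprime action to split $V$, reduce to $V = [V, U]$, and then bound $|V|$ by analysing the $\mathbb{F}_q U$-module structure. Since $U$ acts coprimely on the elementary abelian $q$-group $V$, we have $V = [V, U] \oplus C_V(U)$, so without loss of generality $V = [V, U]$, and the task reduces to bounding $|V|$ itself (the bound on $|U|$ will then follow, since a $q'$-group of automorphisms acts faithfully on $V$ and thus embeds in $\mathrm{Aut}(V) = \mathrm{GL}_{\dim V}(\mathbb{F}_q)$). For every $u \in U$, the decomposition $V = [V, u] \oplus C_V(u)$ gives $|V : C_V(u)| = |[V, u]| \leq m$; in particular, if the action is nontrivial then $q \leq m$.

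Viewing $V$ as an $\mathbb{F}_q U$-module and applying Maschke's theorem (to finitely generated subgroups of $U$, which is all that is ever needed), I would decompose $V$ into isotypical components $V = \bigoplus_j W_j^{t_j}$ with the $W_j$ pairwise non-isomorphic irreducible $U$-submodules. Writing $K_j = C_U(W_j)$ and $d = \lfloor \log_q m\rfloor$, the identification $[W_j^{t_j}, u] = [W_j, u]^{t_j}$ turns the hypothesis into
\[
\sum_{j : u \notin K_j} t_j \dim_{\mathbb{F}_q} [W_j, u] \leq d \qquad \text{for every } u \in U.
\]
Choosing, for each fixed $j$, some $u \in U \setminus K_j$ (which exists since otherwise the isotypical component would be trivial) yields $t_j \leq d$, so every multiplicity is $m$-bounded.

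The main obstacle is the remaining step: bounding both the number of isotypical components and the order of each $W_j$. Inside each component, $U/K_j$ acts faithfully and irreducibly on $W_j$ and inherits $|[W_j, u]| \leq m$, so the problem reduces to the irreducible faithful case. I would then exploit the following two facts in tandem: first, the trivial intersection $\bigcap_j K_j = 1$ (after replacing $U$ by its faithful quotient), and the constraint that every element of $U$ lies outside at most $d$ of the $K_j$; and second, the family $\{C_V(u)\}_{u \in U}$ consists of subgroups of $V$ of index at most $m$ with trivial intersection. A covering argument \emph{à la} B.~H.\ Neumann applied to this family, combined with the multiplicity bound $t_j \leq d$, should produce an $m$-bound on the number of distinct isomorphism types of irreducibles appearing and on each $\dim W_j$, hence on $\dim_{\mathbb{F}_q} V$. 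Once $|V|$ is $m$-bounded, $|U| \leq |\mathrm{GL}_{\dim V}(\mathbb{F}_q)|$ is $m$-bounded as well.
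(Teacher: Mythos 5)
Your setup --- reducing to $V=[V,U]$, the isotypic decomposition, and the multiplicity bound $t_j\le d$ --- is correct as far as it goes, but the proof stops exactly where the real difficulty begins, and the step you defer to a ``covering argument \emph{\`a la} B.~H.~Neumann'' does not go through as described. A family of subgroups $\{C_V(u)\}_{u\in U}$ of index at most $m$ with trivial intersection carries no bound whatsoever on $\dim_{\mathbb{F}_q}V$ (take $V=\mathbb{F}_q^n$ and the family of all hyperplanes), so this input alone cannot bound the number of isotypic components or the size of each $W_j$. Similarly, the constraint that every $u$ lies outside at most $d$ of the normal subgroups $K_j=C_U(W_j)$ does not bound the number of components: to exploit it you would need a single element acting nontrivially on many components at once, and a naive product $u_1u_2\cdots u_k$ of elements chosen one per component can fail, since a later factor may cancel the action of an earlier one on its component. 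The same issue recurs inside a single faithful irreducible constituent with trivial centre, where no element is forced to act fixed-point-freely. So the key quantitative step is genuinely missing.

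The paper closes precisely this gap by a different route. For abelian $U$ it greedily chooses $u_1,u_2,\dots$ with $u_{i+1}\in C_U([V,u_1]\oplus\dots\oplus[V,u_i])$; this centralizing condition forces each $[V,u_j]$ to lie in $C_V(u_i)$ for all $i\ne j$, so the product $w=u_1\cdots u_k$ acts on $[V,u_j]$ exactly as $u_j$ does, whence $[V,w]\supseteq [V,u_1]\oplus\dots\oplus[V,u_k]$ and the process must terminate after an $m$-bounded number of steps; at that point $U$ acts faithfully on a subgroup of $m$-bounded order, so $|U|$ is $m$-bounded. The general case is then reduced to the abelian one via a maximal normal abelian subgroup $M$ of a Sylow subgroup $P$ of $U$, using $C_P(M)=M$ to bound $|P|$, and only after $|U|$ is bounded is $|[V,U]|$ bounded via $[V,U]=\sum_{u\in U}[V,u]$. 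Note that the paper never attempts to bound $|V|$ or the irreducible constituents at all --- it bounds $|U|$ first. To complete your argument you would need some version of this ``disjoint supports'' construction; the module-theoretic bookkeeping you have set up does not by itself suffice.
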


\begin{proof}
First suppose that $U$ is abelian. We consider $V$ as an ${\mathbb F} _qU$-module. Pick $u_1\in U$ such that $[V,u_1]\ne 0$. By Maschke's theorem, $V= [V,u_1]\oplus C_V(u_1)$, and both summands are $U$-invariant, since $U$ is abelian. If $C_U([V,u_1])=1$, then $|U|$ is $m$-bounded and $[V,U]$ has $m$-bounded order being generated by $[V,u]$, $u\in U$. Otherwise pick $1\ne u_2\in C_U([V,u_1])$; then $V= [V,u_1] \oplus [V,u_2] \oplus C_V(\langle u_1,u_2\rangle )$. If $1\ne u_3\in C_U([V,u_1]\oplus [V,u_2])$, then $V= [V,u_1]\oplus [V,u_2]\oplus [V,u_3] \oplus C_V(\langle u_1,u_2,u_3\rangle )$, and so on. If $C_U([V,u_1]\oplus\dots \oplus [V,u_k])=1$ at some $m$-bounded step $k$, then again $[V,U]$ has $m$-bounded order. However, if there are too many steps, then
for the element $w=u_1u_2\cdots u_k$ we shall have $0\ne [V,u_i]= [[V,u_i],w]$, so that $[V,w] = [V,u_1]\oplus\dots \oplus [V,u_k]$ will have order greater than $m$, a contradiction.

We now consider the general case. Since every element $u\in U$ acts faithfully on $[V,u]$, the exponent of $U$ is $m$-bounded. If $P$ is a Sylow $p$-subgroup of $U$, let $M$ be a maximal normal abelian subgroup of $P$. By the above, $|[V,M]|$ is $m$-bounded. Since $M$ acts faithfully on $[V,M]$, we obtain that $|M|$ is $m$-bounded. Hence $|P|$ is $m$-bounded, since $C_P(M)= M$ and $P/M$ embeds in the automorphism group of $M$.  Since $|U|$ has only $m$-boundedly many prime divisors, it follows that $|U|$ is $m$-bounded. Since $[V,U]=\sum_{u\in U}[V,u]$, we obtain that $|[V,U]|$ is also $m$-bounded.
\end{proof}

 Recall that the Fitting series starts with the Fitting subgroup $F_1(G)=F(G)$, and by induction, $F_{k+1}(G)$ is the inverse image of $F(G/F_k(G))$. If $G$ is a  soluble group, then the least number $h$ such that $F_h(G)=G$ is the \textit{Fitting height} of $G$. The following lemma is well known and is easy to prove (see, for example, \cite[Lemma~10]{khu-maz}).

 \begin{lemma}\label{l-metan}
 If $G$ is a finite group of Fitting height 2, then $\gamma _{\infty}(G)=\prod _q [F_q,G_{q'}]$, where $F_q$ is a Sylow $q$-subgroup of $F(G)$, and $G_{q'}$ is a Hall ${q'}$-subgroup of $G$.
 \end{lemma}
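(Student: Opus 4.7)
The plan is to establish the equality prime-by-prime, namely $\gamma_\infty(G) \cap F_q = [F_q, G_{q'}]$ for each prime $q$. Since $G/F(G)$ is nilpotent, $\gamma_\infty(G) \leq F(G) = \prod_q F_q$, and the nilpotent normal subgroup $\gamma_\infty(G)$ decomposes as $\prod_q (\gamma_\infty(G) \cap F_q)$, so the prime-by-prime equality yields the formula. For concreteness I would fix a Sylow system of $G$ and take the $P_q$ and $G_{q'}$ from it, which guarantees the compatibility $P_r \leq G_{q'}$ for $r \neq q$.

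The inclusion $[F_q, G_{q'}] \leq \gamma_\infty(G) \cap F_q$ follows from the fact that in the nilpotent quotient $G/\gamma_\infty(G)$ elements of coprime order commute: the image of $F_q$ is a $q$-group and the image of $G_{q'}$ is a $q'$-group, so their commutator lies in $\gamma_\infty(G)$. Along the way I would verify that $[F_q, G_{q'}]$ is actually normal in $G$ (not just in $F_q G_{q'}$): since $G_{q'} F(G)$ is uniquely determined as the preimage of the normal Hall $q'$-subgroup of the nilpotent $G/F(G)$, and since $\prod_{p \neq q} F_p \leq G_{q'}$, any conjugate $G_{q'}^u$ lies in $F_q G_{q'}$, and by Schur--Zassenhaus it is conjugate to $G_{q'}$ by some $f \in F_q$; thus $[F_q, G_{q'}]^u = [F_q, G_{q'}]^f = [F_q, G_{q'}]$.

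For the reverse inclusion $\gamma_\infty(G) \cap F_q \leq [F_q, G_{q'}]$, the main idea is to set $K := [F_q, G_{q'}] \cdot \prod_{p \neq q} F_p$ (a normal subgroup of $G$ by the above) and show that $\hat G := G/K$ is nilpotent. This will imply $\gamma_\infty(G) \leq K$, and then $\gamma_\infty(G) \cap F_q \leq K \cap F_q = [F_q, G_{q'}]$ as required.

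To prove $\hat G$ is nilpotent, I will check that every Sylow subgroup of $\hat G$ is normal. Let $\hat F_q = F_q/[F_q, G_{q'}]$ be the image of $F_q$; then $\hat G / \hat F_q \cong G/F(G)$ is nilpotent, and by construction $\hat F_q$ is centralized by the image of $G_{q'}$ in $\hat G$. The Sylow $q$-subgroup $\hat P_q$ is the preimage of the normal Sylow $q$-subgroup of $\hat G / \hat F_q$, hence normal in $\hat G$. For $r \neq q$, the Sylow $r$-subgroup $\hat P_r$ lies inside the image of $G_{q'}$ (since $P_r \leq G_{q'}$), so it centralizes $\hat F_q$; hence $\hat P_r \hat F_q = \hat P_r \times \hat F_q$, in which $\hat P_r$ is characteristic as the unique Sylow $r$. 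Since $\hat P_r \hat F_q$ is the preimage of the normal Sylow $r$-subgroup of the nilpotent $\hat G/\hat F_q$ and hence normal in $\hat G$, this forces $\hat P_r$ itself to be normal in $\hat G$. The main obstacle will be the bookkeeping in this last step, in particular transferring the normality of Sylow subgroups from $\hat G/\hat F_q$ back to $\hat G$ and exploiting the centralization coming from the Hall $q'$-subgroup.
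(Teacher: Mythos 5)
Your argument is correct and complete. Note that the paper itself offers no proof of this lemma; it is dismissed as ``well known and easy to prove'' with a pointer to \cite[Lemma~10]{khu-maz}, so there is no in-text argument to compare yours against. Your route is the natural one: the reduction to $\gamma_\infty(G)\cap F_q=[F_q,G_{q'}]$ via $\gamma_\infty(G)\le F(G)=\prod_q F_q$ is sound; the forward inclusion via coprime elements commuting in the nilpotent quotient is fine (an equivalent alternative is the coprime-action identity $[F_q,G_{q'}]=[F_q,G_{q'},G_{q'}]$, which places $[F_q,G_{q'}]$ in every $\gamma_n(G)$ directly); the normality of $[F_q,G_{q'}]$ via the normality of $G_{q'}F(G)$ and conjugacy of complements to $F_q$ is correct, using that $[A,B]\trianglelefteq\langle A,B\rangle$; and the verification that $G/K$ is nilpotent by exhibiting all Sylow subgroups as normal, together with $K\cap F_q=[F_q,G_{q'}]$ by the modular law, closes the reverse inclusion. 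No gaps.
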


We now approach the proof of Theorem~\ref{t-finite} with the following lemma.

\begin{lemma}\label{l3}
If $G$ is a finite group such that $|{\mathscr E}(g)|\leq m$ for all $g\in G$, then $G/F(G)$ has exponent at most $m!$.
\end{lemma}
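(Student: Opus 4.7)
My plan is to prove that $g^{m!}$ is a (left) Engel element of $G$ for every $g\in G$, and then to invoke Baer's classical theorem that the set of left Engel elements of a finite group coincides with the Fitting subgroup $F(G)$. Together these yield $g^{m!}\in F(G)$ for every $g\in G$, which is precisely the statement that $G/F(G)$ has exponent dividing $m!$.

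To show that $g^{m!}$ is Engel it suffices, by the definition of the sink, to verify that $\mathscr{E}(g^{m!})=\{1\}$. The key step is a direct application of Lemma~\ref{l-c-s} with $g^{m!}$ playing the role of ``$g$'' in that lemma and $h=g$: trivially $g\in C_G(g^{m!})$, and since $|\mathscr{E}(g^{m!})|\leq m$ the proof of Lemma~\ref{l-c-s} delivers that $h^{m!}=g^{m!}$ centralizes $\mathscr{E}(g^{m!})$. Equivalently, $[z,g^{m!}]=1$ for every $z\in\mathscr{E}(g^{m!})$.

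On the other hand, as observed in the paragraph preceding Lemma~\ref{l-sink}, the map $z\mapsto[z,g^{m!}]$ is a permutation of $\mathscr{E}(g^{m!})$. Since this permutation is constant with value $1$, its domain can have at most one element; because $1$ lies in every sink, we conclude $\mathscr{E}(g^{m!})=\{1\}$. Hence $g^{m!}$ is a left Engel element of $G$, and Baer's theorem finishes the proof.

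The argument is essentially a one-line consequence of Lemma~\ref{l-c-s}, so there is no serious internal obstacle; the main ``hurdle'' is conceptual — noticing that plugging $h=g$ into the sink of $g^{m!}$ immediately collapses that sink to $\{1\}$ — and the only external input needed is Baer's theorem on left Engel elements of finite groups.
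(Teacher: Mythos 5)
Your proof is correct and follows essentially the same route as the paper: apply Lemma~\ref{l-c-s} with $h=g\in C_G(g^{m!})$ to see that $g^{m!}$ centralizes ${\mathscr E}(g^{m!})$, deduce ${\mathscr E}(g^{m!})=\{1\}$ (the paper phrases this via minimality of the sink, you via injectivity of the permutation $z\mapsto[z,g^{m!}]$ --- both are fine), and finish with Baer's theorem.
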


\begin{proof}
Every element $g\in G$ centralizes all its powers. Therefore by Lemma~\ref{l-c-s}, since $|{\mathscr E}(g^{m!})|\leq m$ by hypothesis,  $g^{m!}$ centralizes ${\mathscr E}(g^{m!})$. By the minimality of the $g^{m!}$-sink, then ${\mathscr E}(g^{m!})=\{1\}$.  This means that
$g^{m!}$ is an Engel element and therefore belongs to the Fitting subgroup $F(G)$ by Baer's theorem \cite[Satz~III.6.15]{hup}.
\end{proof}

We are now ready to prove Theorem~\ref{t-finite}.

\begin{proof}[Proof of Theorem~\ref{t-finite}] Recall that $G$ is a finite group such that
$|{\mathscr E}(g)|\leq m$ for every $g\in G$. We need to show that $|\gamma _{\infty }(G)|$ is $m$-bounded.

First suppose that $G$ is soluble. Since $G/F(G)$ has $m$-bounded exponent by Lemma~\ref{l3}, the Fitting height of $G$ is $m$-bounded, which follows from the Hall--Higman theorems \cite{ha-hi}. Hence we can use induction on the Fitting height, with trivial base when the group is nilpotent and $\gamma _{\infty }(G)=1$. When the Fitting height is at least 2, consider the second Fitting subgroup $F_2(G)$. By Lemma \ref{l-metan} we have $\gamma _{\infty }(F_2(G))=\prod _q [F_q,H_{q'}]$, where $F_q$ is a Sylow $q$-subgroup of $F(G)$, and $H_{q'}$ is a Hall ${q'}$-subgroup of $F_2(G)$, the product taken over prime divisors of $|F(G)|$. For a given $q$, let $\bar H_{q'}=H_{q'}/C_{H_{q'}}(F_q)$, and let $V$ be the Frattini quotient $F_q/\Phi (F_q)$. Note that $\bar H_{q'}$ acts faithfully on $V$, since the action is coprime \cite[Satz~III.3.18]{hup}.

For every $x\in \bar H_{q'}$ the order  $|[V,x]|$ is $m$-bounded  by Lemma~\ref{l0}. Then $|\bar H_{q'}|$ is $m$-bounded by Lemma~\ref{l2}. As a result, $|[F_q , H_{q'}]|= |[F_q ,\bar H_{q'}]|$ is $m$-bounded, since $[F_q ,\bar H_{q'}]$ is the product of $m$-boundedly many subgroups $[F_q ,\bar h]$ for $h\in H_{q'}$, each of which has $m$-bounded order by Lemma~\ref{l0}.

For the same reasons, there are only $m$-boundedly many primes $q$ for which $[F_q , H_{q'}]\ne 1$. As a result, $|\gamma _{\infty }(F_2(G))|$ is $m$-bounded. Induction on the Fitting height applied to $G/\gamma _{\infty }(F_2(G))$ completes the proof in the case of soluble $G$.

Now consider the general case. Most of the following arguments follow the same scheme as in the proof of Theorem~1.2 in \cite{khu-shu153}. First we show that the quotient $G/R(G)$ by the soluble radical is of $m$-bounded order.
Let $E$ be the socle
of $G/R(G)$. It is known that $E$ contains its centralizer  in $G/R(G)$, so it suffices to show that $E$ has $m$-bounded order. In the quotient by the soluble radical, $E=S_1\times\dots\times S_k$ is a direct product of non-abelian finite simple groups $S_i$. Since the exponent of $G/F(G)$ is $m$-bounded by Lemma~\ref{l3}, the exponent of $E$ is also $m$-bounded. Now the classification of finite simple groups implies that every $S_i$ has $m$-bounded order, and it remains to show that the number of factors is also $m$-bounded. By Shmidt's theorem \cite[Satz~III.5.1]{hup}, every $S_i$ has a non-nilpotent soluble subgroup $R_i$, for which $\gamma _{\infty} (R_i)\ne 1$. Since we already proved our theorem for soluble groups, we can apply it to $T=R_1\times \dots \times R_k$. We obtain that $|\gamma _{\infty} (T)|$ is $m$-bounded, whence the number of factors is $m$-bounded.

Thus, $|G/R(G)|$ is $m$-bounded. Since $|\gamma _{\infty} (R(G)|$ is $m$-bounded by the soluble case proved above, we can consider $G/\gamma _{\infty} (R(G))$ and assume that $R(G)=F(G)$ is nilpotent. Then  $|G/F(G)|$ is $m$-bounded. We now use induction on $|G/F(G)|$.  The basis of this induction includes the trivial case $G/F(G)=1$ when $\gamma _{\infty}(G)=1$. But the bulk of the proof deals with the case where $G/F(G)$ is a non-abelian simple group.

 Thus, suppose that $G/F(G)$ is a non-abelian simple group of $m$-bounded order.
Let $g\in G$ be an arbitrary element. The subgroup $F(G)\langle g\rangle$ is soluble, and therefore $|\gamma _{\infty }(F(G)\langle g\rangle)|$ is $m$-bounded by the above. Since $\gamma _{\infty }(F(G)\langle g\rangle)$ is normal in $F(G)$, its normal closure $\langle \gamma _{\infty }(F(G)\langle g\rangle) ^G\rangle$ is a product of at most $|G/F(G)|$ conjugates, each normal in $F(G)$, and therefore has $m$-bounded order. Choose a transversal $\{t_1,\dots, t_k\}$ of $G$ modulo $F(G)$ and set
$$
K=\prod _i\langle \gamma _{\infty }(F(G)\langle t_i\rangle) ^G\rangle,
$$
which is a normal subgroup of $G$ of $m$-bounded order. It is sufficient to obtain an $m$-bounded estimate for $|\gamma _{\infty }(G/K)|$. Hence we can assume that $K=1$. We remark that then
\begin{equation}\label{e-nil}
[F(G), g, \dots , g]=1\qquad \text{for any } g\in G,
 \end{equation}
 when $g$ is repeated sufficiently many times. Indeed, $g\in F(G)t_i$ for some $t_i$, and the subgroup $F(G)\langle t_i\rangle$ is nilpotent due to our assumption that $K=1$.

We now claim that
\begin{equation}\label{e-nil2}
[F(G), G, \dots , G]=1
\end{equation}
 if $G$ is repeated sufficiently many times. It is sufficient to prove that $[F_q, G, \dots , G]=1$ for every Sylow $q$-subgroup $F_q$ of $F(G)$. For any $q'$-element $h\in G$ we have $[F_q,h]=[F_q,h,h]$ and therefore $[F_q,h]=1$ in view of \eqref{e-nil}. Let $H$ be the subgroup of $G$ generated by all $q'$-elements. Then $G=F_qH$ since $G/F(G)$ is non-abelian simple, and $[F_q,H]=1$, so that
$$
[F_q, G, \dots , G]=[F_q, F_q, \dots , F_q]=1
$$
for a sufficiently long commutator.

We finally show that $D:=\gamma _{\infty }(G)$ has $m$-bounded order. First we show that $D=[D,D]$. Indeed, since $G/F(G)$ is non-abelian simple, $D$ is nonsoluble and we must have
$$
G=F(G)[D,D].
$$
Taking repeatedly commutator with $G$ on both sides and applying \eqref{e-nil2}, we obtain $D=\gamma _{\infty }(G)\leq  [D,D]$, so $D=[D,D]$.

Since $F(G)\cap D$ is hypercentral in $D$ by \eqref{e-nil2} and $[D,D]=D$, it follows that
$F(G)\cap D\leq Z(D)\cap [D,D]$ by the well-known Gr\"un lemma \cite[Satz~4]{gr}. Thus, $D$ is a central covering of the simple group $D/(F(G)\cap D)\cong G/F(G)$, and therefore by Schur's theorem \cite[Hauptsatz~V.23.5]{hup} the order of $D$ is bounded in terms of the $m$-bounded order of $G/F(G)$.
Thus, we have proved that $|\gamma _{\infty }(G)|$ is $m$-bounded in the case where $G/F(G)$ is a non-abelian simple group.

We now finish the proof of Theorem~\ref{t-finite} by induction on the $m$-bounded order  $k=|G/F(G)|$ proving that $|\gamma _{\infty }(G)|$ is $(m,k)$-bounded. The basis of this induction is the case of  $G/F(G)$ being simple: nonabelian simple was considered above, and simple of prime order is covered by the soluble case. Now suppose that $G/F(G)$ has  a nontrivial proper normal subgroup with full inverse image $N$, so that $F(G)<N\lhd G$.  Since $F(N)=F(G)$, by induction applied to $N$, the order $|\gamma _{\infty }(N)|$ is bounded in terms of $m$ and $|N/F(G)|<k$. Since $N/\gamma _{\infty }(N)\leq F( G/\gamma _{\infty }(N))$,  by induction applied to $G/\gamma _{\infty }(N)$ the order $| \gamma _{\infty }(G/\gamma _{\infty }(N) )|$ is bounded in terms of $m$ and $|G/N|<k$. As a result, $|\gamma _{\infty }(G)| $ is $(m,k)$-bounded, as required.
\end{proof}

\section{Profinite almost Engel groups}\label{s-pf}

In this and the next sections, unless stated otherwise, a subgroup of a topological group will always mean a closed subgroup, all homomorphisms will be continuous, and quotients will be by closed normal subgroups. This also applies to taking commutator subgroups, normal closures, subgroups generated by subsets, etc. Of course, any finite subgroup is automatically closed. We also say that a subgroup is generated by a subset $X$ if it is generated by $X$ as a topological group.

In this section we prove Theorem~\ref{t-e} for profinite groups, while   Corollary~\ref{c-em} for profinite groups is an immediate corollary of Theorem~\ref{t-finite}.

\begin{theorem}\label{t-eprof}
Suppose that $G$ is an almost Engel profinite group.
Then $G$ has a finite normal subgroup $N$ such that $G/N$ is locally nilpotent.
\end{theorem}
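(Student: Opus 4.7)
My plan combines a Baire category argument with Theorem~\ref{t-finite} applied in every finite quotient and, finally, the Wilson--Zelmanov theorem.  The aim is to produce an open normal subgroup of $G$ on which the Engel sinks are uniformly bounded, bound $\gamma_\infty(G)$ using Theorem~\ref{t-finite}, and then quotient out $\gamma_\infty(G)$ to obtain a genuinely Engel group.

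For the Baire step, let $S_m = \{g \in G : |\mathscr{E}(g)| \leq m\}$; since each sink is finite, $G = \bigcup_m S_m$.  Each $S_m$ is closed: by the remark after Lemma~\ref{l-sink}, $\mathscr{E}(g)N/N$ is the $\bar g$-sink in $G/N$ for every open normal $N$, and $|\mathscr{E}(g)| = |\mathscr{E}(g)N/N|$ once $N$ is small enough to separate the finitely many points of $\mathscr{E}(g)$.  Hence
\[
S_m = \bigcap_{N \triangleleft_o G} \pi_N^{-1}\bigl(\{\bar x \in G/N : |\mathscr{E}_{G/N}(\bar x)| \leq m\}\bigr),
\]
an intersection of clopen sets (the inner set lives in the finite group $G/N$).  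Since $G$ is a compact Hausdorff, hence Baire, space, some $S_m$ has nonempty interior, containing a coset $g_0 H$ of an open normal subgroup $H \triangleleft G$.

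Next, I would bootstrap the Baire conclusion to a uniform bound on an open normal subgroup.  Using conjugation invariance of sinks, every conjugate coset $xg_0x^{-1}H$ also lies in $S_m$; applying Baire iteratively to each remaining coset of $H$ in $G$ (each a closed Baire subspace) and intersecting the resulting open subgroups, one produces an open normal $K \triangleleft G$ and a constant $M$ such that every coset of $K$ in $G$ contains a sub-coset lying in $S_M$.  Promoting this ``coset-wise'' bound to a uniform bound $|\mathscr{E}(k)| \leq M'$ for all $k \in K$ is the step I expect to be the main obstacle; I would try it via Lemma~\ref{l-c-s} applied to powers of $k$, in the spirit of the proof of Lemma~\ref{l3} (which pushes $m!$-th powers into the Fitting subgroup).

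With the uniform bound on $K$ in hand, Theorem~\ref{t-finite} in every finite quotient $K/N$ (for $N \triangleleft G$ open normal with $N \leq K$) gives $|\gamma_\infty(K/N)| \leq f(M')$ uniformly in $N$, so the inverse limit $\gamma_\infty(K)$ is finite.  A final argument, using the finite index $[G:K]$ and mirroring the end of the proof of Theorem~\ref{t-finite} (where induction on $|G/F(G)|$ upgraded bounds on $\gamma_\infty$ of a finite-index subgroup to bounds on $\gamma_\infty(G)$), then shows that $\gamma_\infty(G)$ itself is finite.  Finally, by Lemma~\ref{l-sink} every $z \in \mathscr{E}(g)$ satisfies $z = [z, g, \ldots, g]$ with arbitrarily many factors, so $\bigcup_g \mathscr{E}(g) \subseteq \gamma_\infty(G)$; setting $N := \gamma_\infty(G)$, the quotient $G/N$ has trivial $\bar g$-sinks for every $\bar g$, hence is Engel, and by Wilson--Zelmanov $G/N$ is locally nilpotent, as required.
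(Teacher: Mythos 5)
Your overall architecture (Baire category, then Theorem~\ref{t-finite} in finite quotients, then Wilson--Zelmanov after factoring out $\gamma_\infty(G)$) matches the paper's, and both your closedness argument for $S_m$ and the final observation $\bigcup_g \mathscr{E}(g)\subseteq \gamma_\infty(G)$ are correct. But the step you yourself flag as the main obstacle --- promoting ``some coset $g_0H$ lies in $S_m$'' to a uniform bound on a whole open normal subgroup --- is a genuine gap, and the tool you propose (Lemma~\ref{l-c-s} applied to powers, in the spirit of Lemma~\ref{l3}) cannot close it: that lemma only yields $\mathscr{E}(k^{m!})=\{1\}$ for elements $k$ already known to have small sinks, i.e.\ it controls sinks of \emph{powers}, whereas you need to pass from $|\mathscr{E}(g_0u)|\le m$ to a bound on $|\mathscr{E}(u)|$, and in a general profinite group there is no visible relation between $\mathscr{E}(g_0u)$ and $\mathscr{E}(u)$. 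The paper's solution is to manufacture a \emph{metabelian} situation before applying Baire: it shows each $[N_g,g]$ is pronilpotent (via Baer's theorem), deduces that the quotient by the largest normal pronilpotent subgroup $K$ is an FC-group, invokes Shalev's theorem that a profinite FC-group has finite derived subgroup to obtain an open $H$ with $H'\le K$, and only then runs the Baire argument inside $M=H/K'$. There Lemma~\ref{l-metab}(a) makes the sinks $\mathscr{E}(a)$ and $\mathscr{E}(au)$ normal subgroups, and the Hirsch--Plotkin theorem applied to the product of $\bar M'\langle\bar a\rangle$ and $\bar M'\langle\bar a\bar u\rangle$ gives $\mathscr{E}(u)\le\mathscr{E}(a)\mathscr{E}(au)$, hence $|\mathscr{E}(u)|\le m^2$ on an open subgroup. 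None of this is recoverable from your sketch.

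The second gap is the passage from ``$\gamma_\infty(K)$ is finite for an open normal $K$'' to ``$\gamma_\infty(G)$ is finite''. This does not simply mirror the end of the proof of Theorem~\ref{t-finite}: that induction on $|G/F(G)|$ uses the uniform bound $|\mathscr{E}(g)|\le m$ for \emph{all} $g\in G$ (via Lemma~\ref{l0}, the boundedness of the number of primes $q$ with $[F_q,G_{q'}]\ne 1$, and so on), and after your bootstrap such a bound is available only on $K$, not on $G$. The paper instead converts the finiteness of $\gamma_\infty$ of the open subgroup into the openness of $F(G)$ via Proposition~\ref{p-hall}(b), and then reruns a profinite version of the simple-quotient analysis (normal closures of the finite subgroups $[Q,g]$, the Schur multiplier of $G/F(G)$) using only the finiteness of each individual sink $\mathscr{E}(g)$ rather than a uniform bound. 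So your outline identifies the right landmarks but omits the two substantive ideas --- the reduction to a metabelian quotient and the induction on $|G/F(G)|$ with $F(G)$ open --- that make the proof work.
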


Recall that pro-(finite nilpotent) groups, that is, inverse limits of finite nilpotent groups, are called \textit{pro\-nil\-po\-tent} groups.

\begin{lemma}\label{l-p-n}
An almost Engel  profinite group is pro\-nil\-po\-tent if and only if it is locally nilpotent.
\end{lemma}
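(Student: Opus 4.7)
The plan is to treat the two implications separately. The direction ``locally nilpotent $\Rightarrow$ pronilpotent'' requires no Engel hypothesis. Given a finite continuous quotient $G/N$, I would pick $g_1,\dots,g_k\in G$ whose cosets generate the finite group $G/N$; the abstract subgroup $\langle g_1,\dots,g_k\rangle$ is nilpotent by local nilpotency, and $G/N$ is a homomorphic image of it. Hence every finite continuous quotient of $G$ is nilpotent, so $G=\varprojlim G/N$ is pronilpotent.

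For the direction ``pronilpotent $\Rightarrow$ locally nilpotent'', my strategy is to upgrade the hypothesis from almost Engel to genuinely Engel and then invoke the Wilson--Zelmanov theorem cited in the introduction. Concretely, it suffices to show that ${\mathscr E}(g)=\{1\}$ for every $g\in G$. Fix $g\in G$ and $z\in {\mathscr E}(g)$. By Lemma~\ref{l-sink} together with \eqref{e-orb2}, there exists $k\geq 1$ such that
\[
z=[z,\underbrace{g,\dots,g}_{jk}]\qquad\text{for every positive integer }j.
\]
Now project to an arbitrary finite continuous quotient $\bar G=G/N$; by pronilpotency, $\bar G$ is nilpotent of some finite class $c=c(N)$, so every $(c+1)$-fold commutator of the form $[\bar w,\bar g,\dots,\bar g]$ vanishes. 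Taking $j$ large enough that $jk>c$, the displayed identity projected to $\bar G$ reads $\bar z=1$. Hence $z$ lies in every open normal subgroup of $G$, and by Hausdorff separation in profinite groups $z=1$. Thus ${\mathscr E}(g)=\{1\}$, so $g$ is an Engel element, and Wilson--Zelmanov yields that $G$ is locally nilpotent.

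The only potentially delicate point is matching the sink in $G$ to its image under the quotient map, but Lemma~\ref{l-sink} encodes membership in the sink as a self-commutator identity that is preserved by any homomorphism. I therefore expect the proof to be short, with no real obstacle beyond assembling these observations in the order above.
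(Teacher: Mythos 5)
Your proposal is correct and follows essentially the same route as the paper: both directions reduce the nontrivial implication to showing that every Engel sink is trivial, using the self-commutator characterization of Lemma~\ref{l-sink} projected into a finite nilpotent quotient, and then invoke the Wilson--Zelmanov theorem. The only cosmetic difference is that the paper phrases the sink argument as a contradiction (choosing $N$ with $z\notin N$) while you show directly that $z$ lies in every open normal subgroup.
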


\begin{proof}
Of course, any locally nilpotent profinite group is pro\-nil\-po\-tent.
Conversely, suppose that $G$ is an almost Engel  pro\-nil\-po\-tent group. We claim that all Engel sinks are trivial: ${\mathscr E}(g)=\{ 1\}$ for every $g\in G$. Indeed, otherwise  by Lemma~\ref{l-sink} ${\mathscr E}(g)$ contains a non-trivial element of the form $z=[z,g,\dots ,g]$ with $g$ occurring at least once. Choosing an open normal subgroup $N$ with nilpotent quotient $G/N$ such that $z\not\in N$, we obtain a contradiction.  Thus, ${\mathscr E}(g)=\{ 1\}$ for every $g\in G$, which means that all elements of $G$ are Engel elements, that is, $G$ is an Engel profinite group. Then $G$ is locally nilpotent by the Wilson--Zelmanov theorem \cite[Theorem~5]{wi-ze}.
\end{proof}

 Recall that the pro\-nil\-po\-tent residual of a profinite group $G$ is $\gamma _{\infty}(G)=\bigcap _i\gamma _i(G)$, where $\gamma _i(G)$ are  the terms of the lower central series; this is the smallest normal subgroup with pro\-nil\-po\-tent quotient. The following lemma is well known and is easy to prove. Here, element orders are understood as Steinitz numbers. The same results also hold in the special case of finite groups.

 \begin{lemma}\label{l-res}
 {\rm (a)} The pro\-nil\-po\-tent residual $\gamma _{\infty}(G)$ of a profinite group $G$ is equal to the subgroup generated by all commutators $[x,y]$, where $x,y$ are elements of coprime orders.

 {\rm (b)} For any normal subgroup $N$ of a profinite group $G$ we have $\gamma _{\infty}(G/N)=
 \gamma _{\infty}(G)N/N$.
 \end{lemma}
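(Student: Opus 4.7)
For part~(a), the plan is to let $K$ denote the closed subgroup topologically generated by all commutators $[x,y]$ with $x,y\in G$ of coprime (Steinitz) orders, and to prove the two inclusions $K\leq \gamma_\infty(G)$ and $\gamma_\infty(G)\leq K$ separately. The first inclusion should be immediate: I would use the description of $\gamma_\infty(G)$ as the intersection of all open normal subgroups $N$ with $G/N$ finite nilpotent. For any such $N$, the images $xN,yN$ still have coprime (finite) orders in $G/N$ because the order of $xN$ divides the Steinitz order of $x$; and in a finite nilpotent group any two elements of coprime orders commute. Hence $[x,y]\in N$, and intersecting over all such $N$ gives $[x,y]\in\gamma_\infty(G)$.

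The less immediate inclusion is $\gamma_\infty(G)\leq K$, which I would obtain by showing that $G/K$ is pronilpotent. Given any open normal $M\supseteq K$ of $G$, it suffices to show that $G/M$ is nilpotent; and since a finite group is nilpotent whenever any two elements of coprime prime-power orders commute, it is enough to verify, for every pair of distinct primes $p,q$, that $[\bar x,\bar y]=1$ whenever $\bar x\in G/M$ has order $p^a$ and $\bar y\in G/M$ has order $q^b$. Here I would invoke the pro-$p$ decomposition of elements of a profinite group: any $g\in G$ has a canonical product decomposition $g=\prod_{p}g_p$ in which $g_p$ topologically generates the pro-$p$ component of the procyclic closure $\overline{\langle g\rangle}$. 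Lifting $\bar x=xM$ and $\bar y=yM$ and using that $\bar x$ has $p$-power order, one sees $x_r\in M$ for every prime $r\neq p$, so $\bar x=\bar x_p$; symmetrically $\bar y=\bar y_q$. Since $x_p$ and $y_q$ have coprime Steinitz orders, $[x_p,y_q]\in K\leq M$, whence $[\bar x,\bar y]=1$, as required.

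Part~(b) should follow formally from the universal property of $\gamma_\infty$ as the smallest closed normal subgroup with pronilpotent quotient, with no further profinite input. For $\gamma_\infty(G/N)\leq \gamma_\infty(G)N/N$, I would observe that $G/(\gamma_\infty(G)N)$ is a quotient of the pronilpotent group $G/\gamma_\infty(G)$ and therefore pronilpotent, which by minimality of $\gamma_\infty(G/N)$ inside $G/N$ yields the inclusion. For the reverse inclusion, letting $H$ be the closed preimage in $G$ of $\gamma_\infty(G/N)$, the quotient $G/H\cong (G/N)/\gamma_\infty(G/N)$ is pronilpotent, so $\gamma_\infty(G)\leq H$ and hence $\gamma_\infty(G)N/N\leq H/N=\gamma_\infty(G/N)$.

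The main obstacle I anticipate is the pro-$p$ decomposition step in part~(a): one needs the structural fact that every element of a profinite group decomposes uniquely as a (possibly infinite) commuting product of its pro-$p$ components, and that this decomposition is compatible with passage to any closed quotient. Once this standard fact about procyclic subgroups of profinite groups is in hand, the rest of the argument is finite-group bookkeeping, and part~(b) becomes a formal consequence of the defining property of the pronilpotent residual.
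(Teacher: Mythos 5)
Your proof is correct. For part~(a) it is in substance the argument the paper has in mind: the paper's one-line proof invokes the characterization of pronilpotent groups as profinite groups all of whose Sylow subgroups are normal, and your reduction to finite quotients together with the primary decomposition $g=\prod_p g_p$ of elements of the procyclic closure $\overline{\langle g\rangle}$ is precisely how one makes that characterization effective --- the decomposition is the Sylow decomposition of $\overline{\langle g\rangle}$, and the finite-group criterion you use (a finite group in which elements of coprime prime-power orders commute is nilpotent) is the finite shadow of ``all Sylow subgroups normal''. Where you genuinely diverge is part~(b): the paper deduces it from part~(a) via the fact that elements of coprime orders in $G/N$ can be lifted to elements of coprime orders in $G$ (itself another instance of the primary decomposition), whereas you give a purely formal argument from the characterization of $\gamma_\infty(G)$ as the smallest closed normal subgroup with pronilpotent quotient, using only that closed quotients of pronilpotent groups are pronilpotent. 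Your route for (b) is independent of (a) and avoids the lifting fact altogether, which makes it slightly more robust; the paper's route gets (b) essentially for free once (a) and the lifting statement are in hand. Both arguments are complete and correct.
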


 \begin{proof}
 Part (a) follows from the characterization of pro\-nil\-po\-tent groups as profinite groups all of whose Sylow subgroups are normal. Part (b) follows from the fact that for any elements $\bar x, \bar y$ of coprime orders in a quotient $G/N$ of a profinite group $G$ one can find pre-images $x,y\in G$ which also have coprime orders.
 \end{proof}

The following generalization of Hall's criterion for nilpotency \cite{hall58}, which will be used later, already appeared in \cite{khu-shu153}, but we reproduce the proof for the benefit of the reader. We denote the derived subgroup of a group $B$ by $B'$.

\begin{proposition}\label{p-hall}

{\rm (a)} Suppose that $B$ is a normal subgroup of a group $A$ such that $B$ is nilpotent of class $c$ and $\gamma _{d}(A/B')$ is finite of order $k$.
Then the subgroup $C=C_A(\gamma _{d}(A/B'))=\{a\in A\mid [\gamma _{d}(A), a]\leq B'\}$ has finite $k$-bounded index and is nilpotent of $(c,d)$-bounded class.

{\rm (b)} Suppose that $B$ is a normal subgroup of a profinite group $A$ such that $B$ is pro\-nil\-po\-tent and $\gamma _{\infty}(A/B')$ is finite.
Then the subgroup $D=C_A(\gamma _{\infty }(A/B'))=\{a\in A\mid [\gamma _{\infty }(A), a]\leq B'\}$ is open and pro\-nil\-po\-tent.
\end{proposition}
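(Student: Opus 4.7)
The plan is to handle part~(a) by combining an embedding argument for the index with P.~Hall's nilpotency criterion for the class bound, and to derive part~(b) as a clean reduction to (a) via finite quotients.

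\textbf{Part~(a).} The conjugation action of $A$ on the finite group $\gamma_d(A/B')$ gives a homomorphism $A\to\operatorname{Aut}(\gamma_d(A/B'))$ with kernel $C$, so $A/C$ embeds in this finite group and $|A/C|$ is $k$-bounded; in particular $C\lhd A$, being the centralizer of a characteristic subgroup of $A/B'$. For the nilpotency class, the key observation is that $\gamma_d(A)/B'\le Z(C/B')$ by definition of $C$, whence $\gamma_{d+1}(C)\le B'$ and $C/B'$ is nilpotent of class $\le d$. Moreover $B'\le C$ (since $[\gamma_d(A),B']\le B'$ trivially) and $B'$ is nilpotent of class $\le c-1$ as a subgroup of $B$. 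The desired $(c,d)$-bounded nilpotency class of $C$ then follows from P.~Hall's classical criterion \cite{hall58}---a group with a normal nilpotent subgroup $N$ of class $c_1$ such that $G/N'$ is nilpotent of class $c_2$ has nilpotency class bounded in $(c_1,c_2)$---applied to $G=C$, in combination with a commutator calculation iterating the argument along the derived series of $B$ (with trivial base case $B$ abelian, where $B'=1$ makes $C=C/B'$ already of class $\le d$).

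\textbf{Part~(b).} The openness follows exactly as in (a): $A/D$ embeds in $\operatorname{Aut}(\gamma_\infty(A/B'))$, hence is finite, and since $A$ is profinite, $D$ is open; also $D\lhd A$. To show $D$ is pronilpotent, I reduce to part~(a) by working in finite quotients of $A$. Fix an arbitrary open normal subgroup $U$ of $A$, and set $\bar A=A/U$, $\bar B=BU/U$. Then $\bar B$ is nilpotent normal in the finite group $\bar A$, and by Lemma~\ref{l-res}(b) applied in $A/B'$, the image of $\gamma_\infty(A/B')$ in $\bar A/\bar B'$ equals $\gamma_\infty(\bar A/\bar B')$, a finite group of order at most $|\gamma_\infty(A/B')|$. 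Since $\bar A$ is finite, $\gamma_\infty(\bar A/\bar B')=\gamma_d(\bar A/\bar B')$ for some $d$, and part~(a) implies that $C_{\bar A}(\gamma_d(\bar A/\bar B'))$ is nilpotent. A direct check using the commutator identity $[d,gu]=[d,u]\cdot[d,g]^u$ (for $d\in D$, $g\in\gamma_\infty(A)$, $u\in U$), together with $[d,u]\in U$ and $[d,g]\in B'$, shows that $DU/U$ lies in this centralizer, so $D/(D\cap U)\cong DU/U$ is nilpotent. Since $D\lhd A$, for any open normal subgroup $V$ of $D$ the $A$-core $\bigcap_{a\in A}V^a$ is an open normal subgroup of $A$ contained in $V$, so $D/V$ is a quotient of some $D/(D\cap U')$ and hence is nilpotent. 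Thus every finite continuous quotient of $D$ is nilpotent, i.e., $D$ is pronilpotent.

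\textbf{Main obstacle.} The substantive technical work lies in part~(a): extracting an explicit $(c,d)$-bounded nilpotency class for $C$ from the two containments $\gamma_{d+1}(C)\le B'$ and $\gamma_{c+1}(B)=1$ by careful application of Hall's criterion and commutator calculus. Once (a) is in hand, part~(b) is a fairly routine open-cover reduction using that $D$ is normal in $A$.
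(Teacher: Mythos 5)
Your part~(b) is correct and is essentially the paper's argument: $A/D$ embeds in $\operatorname{Aut}\gamma_{\infty}(A/B')$, hence $D$ is open and normal, and nilpotency of every finite continuous quotient of $D$ is obtained by passing to finite quotients of $A$, using Lemma~\ref{l-res}(b) and part~(a). In part~(a) the index bound, the normality of $C$, and the containment $\gamma_{d+1}(C)\le B'$ are all fine. The gap is precisely at the step you flag as the ``substantive technical work'': deducing a $(c,d)$-bounded class for $C$ from $\gamma_{d+1}(C)\le B'$ and $\gamma_{c+1}(B)=1$. Hall's criterion in the form you quote needs a normal nilpotent subgroup $N\lhd G$ with $G/N'$ nilpotent. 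Taking $G=C$ and $N=B'$ requires $C/B''$ to be nilpotent, whereas what you have is that $C/B'$ is nilpotent, i.e.\ $G/N$ nilpotent; together with $N$ nilpotent this only says $C$ is nilpotent-by-nilpotent, which is vacuous here. Taking $N=B\cap C$ requires $C/(B\cap C)'$ nilpotent, which is likewise not available. Nor does ``iterating along the derived series of $B$'' repair this: to pass from ``$C/B^{(i)}$ nilpotent'' to ``$C/B^{(i+1)}$ nilpotent'' you would need to know that $C$ acts nilpotently on the abelian section $B^{(i)}/B^{(i+1)}$, and an abelian-by-nilpotent group need not be nilpotent; invoking Hall with $N=B^{(i)}$ at that stage presupposes exactly the conclusion sought. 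So as written the class bound is not established.

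The paper closes this step by a direct computation rather than by citing Hall's theorem: from $[\gamma_d(A),C]\le B'$ one gets $[B,\underbrace{C,\dots ,C}_{d}]\le [\gamma_d(A),C]\le B'$, and repeated applications of the Three Subgroup Lemma push the lower central series of $C$ down the lower central series of $B$, giving $\gamma_{d+1}(C)\le\gamma_2(B)$, then $\gamma_{(d+1)+(2d-1)}(C)\le\gamma_3(B)$, and eventually $\gamma_{1+f(c,d)}(C)\le\gamma_{c+1}(B)=1$ with $f(c,d)=dc(c+1)/2-c(c-1)/2$. If you prefer to keep Hall's criterion as a black box, the correct way is to enlarge the group rather than shrink the normal subgroup: since $B'\le C\lhd A$, both $C/B'$ (class $\le d$) and $B/B'$ (abelian) are normal nilpotent subgroups of $A/B'$, so $CB/B'$ is nilpotent of class $\le d+1$ by Fitting's theorem, and Hall's criterion applied to $G=CB$ with $N=B$ gives that $CB$, hence its subgroup $C$, is nilpotent of $(c,d)$-bounded class. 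One of these two completions must be supplied; your proposal currently contains neither.
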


\begin{proof}
(a)
Since $A/C$ embeds into $\operatorname{Aut}\gamma _{d}(A/B')$, the order of $A/C$ is $k$-bounded. We claim that $C$ is nilpotent of $(c,d)$-bounded class. Indeed, using simple-commutator notation for subgroups, we have
$$
[\underbrace{C,\dots ,C}_{d+1}, C, C,\dots ]\leq [[\gamma _d(A), C], C, \dots ]\leq [[B,B], C, \dots ],
$$
since $[\gamma _d(A), C]\leq B'$ by construction.
Applying repeatedly the Three Subgroup Lemma, we obtain
\begin{align*}
 [[B,B], \underbrace{C, \dots ,C}_{2d-1}, C, \dots ]&\leq \prod _{i+j=2d-1}[[B, \underbrace{C, \dots ,C}_{i}], [B,\underbrace{C, \dots ,C}_{j}], C, \dots ]\\
 &\leq
 [[[B,\underbrace{C, \dots ,C}_{d}], B], C, \dots ]\\
 &\leq [[[B,B],B],C,\dots ].
 \end{align*}
Thus, $\gamma _{d+1}(C)\leq \gamma _2(B)$, then $\gamma _{(d+1)+(2d-1)}(C)\leq \gamma _3(B)$, then a similar calculation gives \allowbreak $\gamma _{(d+1)+(2d-1)+(3d-2)}(C)\leq \gamma _4(B)$, and so on. An easy induction shows that $\gamma _{1+f(c,d)}(C)\leq \gamma _{c+1}(B)=1$ for $1+f(c,d)=1+dc(c+1)/2-c(c-1)/2$, so that $C$ is nilpotent of class $ f(c,d)$.

(b) As a centralizer of a normal section, $D$ is a closed normal subgroup. Since $A/D$ embeds into $\operatorname{Aut}\gamma _{\infty }(A/B')$, the subgroup $D$ has finite index; thus, $D$ is an open subgroup. We now show that the image of $D=C_A(\gamma _{\infty }(A/B'))$ in any finite quotient $\bar A$ of $A$ is nilpotent. Let bars denote the images in $\bar A$. Then $\gamma _{\infty }(\bar A/\bar B')=\overline{\gamma _{\infty }(A/B')}$ by Lemma~\ref{l-res}(b). Therefore, $\bar D\leq C_{\bar A}(\gamma _{\infty }(\bar A/\bar B'))$. In a finite group, $\gamma _{\infty }(\bar A/\bar B')=\gamma _{d}(\bar A/\bar B')$ for some positive integer $d$. Hence $\bar D$ is nilpotent by part (a).
\end{proof}

In general, the set of Engel elements in a profinite group may not be closed. But in an almost Engel group, Engel elements form a closed set, and moreover the following holds.

\begin{lemma}\label{l-closed}
Let $G$ be an almost Engel profinite group, and $k$ a positive integer. Then the  set
$$
E_{k}=\{x\in G\mid |{\mathscr E}(x)|\leq k\}.
$$
is closed in $G$.
\end{lemma}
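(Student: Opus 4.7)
The plan is to realise $E_k$ as an intersection of clopen subsets coming from the finite continuous quotients of $G$. The key ingredient is the observation recorded after Lemma~\ref{l-sink}: for any closed normal subgroup $N$ of $G$ with projection $\pi_N\colon G\to G/N$, the $\pi_N(g)$-sink in $G/N$ equals $\pi_N(\mathscr{E}(g))$. In particular, $|\mathscr{E}(gN)|\leq|\mathscr{E}(g)|$ for every open normal~$N$.

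I would then establish the criterion
\[
g\in E_k\ \Longleftrightarrow\ |\mathscr{E}(gN)|\leq k\ \text{for every open normal}\ N\trianglelefteq G.
\]
The forward implication is immediate from $|\mathscr{E}(gN)|\leq|\mathscr{E}(g)|$. For the converse, since $G$ is profinite Hausdorff and $\mathscr{E}(g)$ is a finite set, and since the open normal subgroups of $G$ have trivial intersection, there exists an open normal $N_0$ small enough that distinct elements of $\mathscr{E}(g)$ lie in distinct cosets of $N_0$. The restriction $\pi_{N_0}|_{\mathscr{E}(g)}$ is then injective, giving $|\mathscr{E}(g)|=|\mathscr{E}(gN_0)|\leq k$.

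Granted this criterion, the conclusion is immediate: for each open normal $N$, the set $\{g\in G:|\mathscr{E}(gN)|\leq k\}$ is the full preimage under the continuous map $\pi_N$ of a subset of the finite discrete group $G/N$, and is therefore clopen in $G$. Hence
\[
E_k=\bigcap_{N}\pi_N^{-1}\bigl(\{\bar g\in G/N:|\mathscr{E}(\bar g)|\leq k\}\bigr),
\]
where the intersection ranges over the open normal subgroups $N$ of $G$, is an intersection of closed sets, and hence closed.

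There is no genuine obstacle in this argument: it rests only on the quotient behaviour of sinks and on the separation of points of a profinite group by open normal subgroups. The same recipe would not apply to a general compact group, since finite discrete quotients of a compact group need not separate points — presumably the reason why the lemma is stated in the profinite setting.
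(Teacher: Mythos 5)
Your proposal is correct and rests on exactly the same two facts as the paper's proof: that the sink of $gN$ in $G/N$ is the image of ${\mathscr E}(g)$ (so cardinality can only drop in quotients), and that an open normal subgroup separating the finitely many elements of ${\mathscr E}(g)$ preserves the cardinality. The paper packages this as openness of the complement (exhibiting the neighbourhood $gN$ on which the sink stays large), while you package it as $E_k$ being an intersection of clopen preimages; these are the same argument in different clothing.
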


\begin{proof}
We wish to show equivalently that the complement of $E_k$ is an open subset of $G$. Every element $g\in (G\setminus E_k)$ is characterized by the fact that $|{\mathscr E}(g)|\geq k+1$. Let $z_1,z_2,\dots ,z_{k+1}$ be some $k+1$ distinct elements in ${\mathscr E}(g)$. Using Lemma~\ref{l-sink}   we can write for every $i=1,\dots ,k+1$
\begin{equation}\label{e-open}
z_i=[z_i,g,\dots ,g], \quad \text{where } g \text{ is repeated } k_i\geq 1 \text{ times}.
 \end{equation}
 Let $N$ be an open normal subgroup of $G$ such that the images of $z_1,z_2,\dots ,z_{k+1}$ are distinct elements in $G/N$. Then equations \eqref{e-open} show that for any $u\in N$ the Engel sink $ {\mathscr E}(gu)$ contains an element in each of the $k+1$ cosets $z_iN$. Thus, all elements in the coset $gN$ are contained in $G\setminus E_k$. We have shown that every element of $G\setminus E_k$ has a neighbourhood that is also contained in $G\setminus E_k$, which is therefore an open subset of $G$.
\end{proof}

Recall that in Theorem~\ref{t-eprof} we need to show that an almost Engel  profinite group $G$ has a finite normal subgroup such that the quotient is locally nilpotent. The first step is to prove the existence of an open locally nilpotent subgroup.

\begin{proposition}\label{p-pf1}
If $G$ is an almost Engel profinite group, then it has an open
normal pro\-nil\-po\-tent subgroup.
\end{proposition}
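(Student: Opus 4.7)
My plan is to combine a Baire category argument with Lemma~\ref{l-c-s} to produce many Engel elements, and then use Baer's theorem and Wilson--Zelmanov (through Lemma~\ref{l-p-n}) to assemble an open normal pronilpotent subgroup.

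First I would apply Baire category: $G$ is compact Hausdorff, hence a Baire space, and $G=\bigcup_{k\geq 1}E_k$ is a countable union of the closed sets of Lemma~\ref{l-closed}, so some $E_k$ has non-empty interior. A non-empty open subset of a profinite group contains a coset of an open normal subgroup, so there exist $g_0\in G$ and an open normal subgroup $N\lhd G$ with $g_0N\subseteq E_k$; enlarging $k$ I may further assume $|G/N|$ divides $k!$, so $g^{k!}\in N$ for every $g\in G$. For any $g\in g_0N$, Lemma~\ref{l-c-s} applied with $h=g\in C_G(g)$ gives that $g^{k!}$ centralizes ${\mathscr E}(g)$, and the minimality-of-sink argument from the proof of Lemma~\ref{l3}, combined with the characterization of sinks in Lemma~\ref{l-sink}, then yields ${\mathscr E}(g^{k!})=\{1\}$, i.e., $g^{k!}$ is an Engel element of $G$.

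Next I would place these Engel elements inside the pronilpotent radical. For any open normal subgroup $M\lhd G$ with projection $\pi_M\colon G\to G/M$, the image of $g^{k!}$ in $G/M$ is Engel and lies in $F(G/M)$ by Baer's theorem \cite[Satz~III.6.15]{hup}. The closed normal subgroup
$$
R:=\bigcap_{M}\pi_M^{-1}(F(G/M)),
$$
intersected over all open normal subgroups $M$ of $G$, is therefore pronilpotent and contains every $(g_0u)^{k!}$ for $u\in N$.

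The crux of the proposition is now to show that $R$ is open in $G$. Passing to $\widetilde G:=G/R$, the image $\widetilde g_0\widetilde N$ of the Baire coset consists of elements of order dividing $k!$, and $\widetilde G$ remains almost Engel. Working inside each finite quotient $\widetilde G/\widetilde M$, the normal closure of $\{(\widetilde g_0\widetilde u)^{k!}:\widetilde u\in\widetilde N\}$ is a nilpotent normal subgroup inside the Fitting subgroup, and the analogous coset has exponent dividing $k!$ modulo it. Combining this bounded-exponent datum with Theorem~\ref{t-finite} and Proposition~\ref{p-hall}(b) should produce a uniform bound on the index of this normal closure, independent of $\widetilde M$; passing to the inverse limit then forces $R$ to be of finite index, and hence open. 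The principal obstacle is exactly this last step---extracting uniform control on finite quotients of $\widetilde G$ from sink information which, a priori, is only available on a single coset---and will likely require propagating the bound across the conjugacy class of $\overline{g_0}$ in $G/N$ using the conjugation-invariance of $E_k$.
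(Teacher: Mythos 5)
There is a genuine gap --- in fact two. First, your derivation that $g^{k!}$ is an Engel element for $g\in g_0N$ does not work as stated. Lemma~\ref{l-c-s} with $h=g$ tells you that $g^{k!}$ centralizes ${\mathscr E}(g)$, but the minimality argument of Lemma~\ref{l3} needs $g^{k!}$ to centralize its \emph{own} sink ${\mathscr E}(g^{k!})$, and for that you must first bound $|{\mathscr E}(g^{k!})|$. In Lemma~\ref{l3} that bound comes from the global hypothesis $|{\mathscr E}(x)|\le m$ for \emph{all} $x\in G$, whereas your Baire argument only gives $|{\mathscr E}(x)|\le k$ on the single coset $g_0N$; the element $g^{k!}$ lies in $N$, not in $g_0N$, and there is no containment between ${\mathscr E}(g)$ and ${\mathscr E}(g^{k!})$ that would let you transfer the bound.

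Second --- and this is the heart of the proposition, which you explicitly leave unresolved --- the passage from sink information on one coset to uniform control of an open subgroup. Theorem~\ref{t-finite} requires $|{\mathscr E}(x)|\le m$ for \emph{every} element of the finite group, so it cannot be applied to the quotients $\widetilde G/\widetilde M$ on the strength of a bound valid only on the image of $g_0N$; conjugation-invariance of $E_k$ merely spreads the bound over other cosets of $N$, never over $N$ itself or over the subgroup those cosets generate. The paper resolves exactly this difficulty by a two-stage argument that your plan is missing. First, for each $g$ separately one picks an open normal $N_g$ with ${\mathscr E}(g)\cap N_g=1$, deduces via Baer's theorem that $[N_g,g]$ is pronilpotent, hence that $G/K$ is an $FC$-group for $K$ the pronilpotent radical, with finite derived subgroup by Shalev's theorem; this yields an open $H$ with $H'\le K$. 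Only then is Baire category applied, inside the \emph{metabelian} quotient $M=H/K'$, where by Lemma~\ref{l-metab} sinks are normal subgroups and the Hirsch--Plotkin theorem converts the coset bound $|{\mathscr E}(au)|\le m$ into the bound $|{\mathscr E}(u)|\le m^2$ on the open subgroup $U$ itself; Theorem~\ref{t-finite} and Proposition~\ref{p-hall}(b) then finish. Without some substitute for this metabelian reduction, your argument stalls exactly where you say it does.
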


Of course, the subgroup in question will also be locally nilpotent by Lemma~\ref{l-p-n}; the result can also be stated as the openness of the largest normal pro\-nil\-po\-tent subgroup.

\begin{proof}
For every $g\in G$ we choose an open normal subgroup $N_g$ such that ${\mathscr E}(g)\cap N_g=1$. Then $g$ is an Engel element in $N_g\langle g\rangle$. By Baer's theorem \cite[Satz~III.6.15]{hup}, in every finite quotient of $N_g\langle g\rangle$ the image of $g$ belongs to the Fitting subgroup. As a result, the subgroup $[N_g, g]$ is pro\-nil\-po\-tent.

Let $\tilde N_g$ be the normal closure of $[N_g, g]$ in $G$. Since $[N_g, g]$ is normal in $N_g$, which has finite index, $[N_g, g]$ has only finitely many conjugates, so $\tilde N_g$ is a product of finitely many normal subgroups of $N_g$, each of which is pro\-nil\-po\-tent.
Hence, so is $\tilde N_g$. Therefore all the subgroups $\tilde N_g$ are contained in the largest normal pro\-nil\-po\-tent subgroup $K$.

It is easy to see that $G/K$ is an $FC$-group (that is, every conjugacy class is finite): indeed, every $\bar g\in G/K$ is centralized by the image of $N_g$, which has finite index in $G$. A~profinite $FC$-group has finite derived subgroup \cite[Lemma~2.6]{sha}. Hence we can choose an open subgroup of $G/K$ that has trivial intersection with the finite derived subgroup of $G/K$ and therefore is abelian; let $H$ be its full inverse image in $G$. Thus, $H$ is an open subgroup such that the derived subgroup $H'$ is contained in $K$.

We now consider the metabelian quotient $M=H/K'$, which is also an almost Engel group, and temporarily use the symbols ${\mathscr E}(g)$ for the Engel $g$-sinks in $M$.
For every positive integer $k$, consider the set
$$
E_{k}=\{x\in M\mid |{\mathscr E}(x)|\leq k\}.
$$
By Lemma~\ref{l-closed}, every set $E_k$ is closed in $M$.
Since $M$ is an almost Engel group, we have
$$
M=\bigcup _{i} E_{i}.
$$
By the Baire category theorem \cite[Theorem~34]{kel}, one of these sets contains an open subset; that is, there is an open subgroup $U$ and a coset $aU$ such that $aU\subseteq E_{m}$ for some $m$. In other words, $|{\mathscr E}(au)|\leq m$ for all $u\in U$.

We claim that $|{\mathscr E}(u)|\leq m^2$ for any $u\in U$. Indeed, by Lemma~\ref{l-metab}(a) both ${\mathscr E}(a)$ and ${\mathscr E}(au)$ are normal subgroups of $M$ contained in $M'$. In the quotient
$$
\bar M=M/\big({\mathscr E}(a) {\mathscr E}(au)\big),
 $$
 both $\bar M'\langle \bar a\rangle$ and $\bar M' \langle \bar a \bar u\rangle$ are normal locally nilpotent subgroups. Hence their product, which contains $\bar u$, is also a locally  nilpotent subgroup by the Hirsch--Plotkin theorem \cite[12.1.2]{rob}. As a result, ${\mathscr E}(u)\leq {\mathscr E}(a) {\mathscr E}(au)$ and therefore $|{\mathscr E}(u)|\leq |{\mathscr E}(a)| \cdot |{\mathscr E}(au)|\leq m^2$.

 Since $M'\leq K/K'$,  it is easy to see that ${\mathscr E}(u) ={\mathscr E}(uk)$ for any $u\in U$ and any $k\in K/K'$. Therefore, setting $V=U(K/K')$ we have $|{\mathscr E}(v)|\leq m^2$ for any $v\in V$.

Thus, the Engel sinks of elements of $V$ uniformly satisfy the  inequality $|{\mathscr E}(v)|\leq m^2$ for all $v\in V$. The same inequality holds in every finite quotient $\bar V$ of $V$, to which we can therefore apply Theorem~\ref{t-finite}. As a result, $|\gamma _{\infty}(\bar V)|\leq n$ for some number $n=n(m)$ depending only on $m$. Then also $|\gamma _{\infty}( V)|\leq n$.

Let $W$ be the full inverse image of $V$, which is an open subgroup of $G$ containing $K$, and let $ \Gamma$ be the full inverse image of $\gamma _{\infty}( V)$.
Now let $F=C_W(\gamma _{\infty}( V))=\{w\in W\mid [\Gamma ,w]\leq K'\}$. By Proposition~\ref{p-hall}(b),
this is an open normal pro\-nil\-po\-tent subgroup, which completes the proof of Proposition~\ref{p-pf1}.
\end{proof}

\begin{proof}[Proof of Theorem~\ref{t-eprof}]  Recall that $G$ is an almost Engel  profinite group, and we need to show that $\gamma _{\infty}(G)$ is finite. Henceforth we denote by $F(L)$ the largest normal pro\-nil\-po\-tent subgroup of a profinite group $L$. By Proposition~\ref{p-pf1} we already know that $G$ has an open normal pro\-nil\-po\-tent
 subgroup, so that $F(G)$ is also open. Further arguments largely follow the scheme of proof of Theorem~1.1 in \cite{khu-shu153}.

Since $G/F(G)$ is finite, we can use induction on $|G/F(G)|$.
       The basis of this induction includes the trivial case $G/F(G)=1$ when $\gamma _{\infty}(G)=1$. But the bulk of the proof deals with the case where $G/F(G)$ is a finite simple group.

Thus, we assume that $G/F(G)$ is a finite simple group (abelian or non-abelian).
Let $p$ be a prime divisor of $|G/F(G)|$, and $g\in G\setminus F(G)$ an element of order $p^n$, where $n$ is either a positive integer or $\infty$ (so $p^n$ is a Steinitz number). For any prime $q\ne p$, the element $g$ acts by conjugation on the Sylow $q$-subgroup $Q$ of $F(G)$ as an automorphism of order dividing $p^n$. The subgroup $[Q, g]$ is a normal subgroup of $Q$ and therefore also a normal subgroup of $F(G)$. The image of $[Q, g]$ in any finite quotient has order bounded in terms of $|{\mathscr E}(g)|$ by Lemma~\ref{l0}. It follows that $[Q, g]$ is finite of order bounded in terms of $|{\mathscr E}(g)|$.

Since $[Q, g]$ is normal in $F(G)$, its normal closure $\langle [Q, g]^G\rangle $ in $G$ is a product of finitely many conjugates and is therefore also finite. Let $R$ be the product of these closures $\langle [Q, g]^G\rangle $ over all Sylow $q$-subgroups $Q$ of $F(G)$ for $q\ne p$. Since $[Q, g]$ is finite of order bounded in terms of $|{\mathscr E}(g)|$ as shown above, there are only finitely many primes $q$ such that $[Q,g]\ne 1$ for the Sylow $q$-subgroup $Q$ of $F(G)$. Therefore $R$ is finite, and it is sufficient to prove that $\gamma _{\infty }(G/R)$ is finite. Thus, we can assume that $R=1$. Note that then $[Q, g^a]=1$ for any conjugate $g^a$ of $g$ and any Sylow $q$-subgroup $Q$ of $F(G)$ for $q\ne p$.

Choose a transversal $\{t_1,\dots, t_k\}$ of $G$ modulo $F(G)$.
 Let $G_1=\langle g^{t_1}, \dots ,g^{t_k}\rangle$. Clearly, $G_1F(G)/F(G)$ is generated by the conjugacy class of the image of $g$. Since $G/F(G)$ is simple, we have $G_1F(G)=G$. By our assumption, the Cartesian product $T$ of all Sylow $q$-subgroups of $F(G)$ for $q\ne p$ is centralized by all elements $g^{t_i}$. Hence,  $[G_1, T]=1$. Let $P$ be the Sylow $p$-subgroup of $F(G)$ (possibly, trivial). Then also $[PG_1, T]=1$, and therefore
 $$\gamma _{\infty }(G)=\gamma _{\infty }(G_1F(G))= \gamma _{\infty }(PG_1).$$
  The image of $\gamma _{\infty }(PG_1)\cap T$ in $G/P$ is contained both in the centre and in the derived subgroup of $PG_1/P$ and therefore is isomorphic to a subgroup of the Schur multiplier of the finite group $G/F(G)$. Since the Schur multiplier of a finite group is finite \cite[Hauptsatz~V.23.5]{hup}, we obtain that $\gamma _{\infty }(G)\cap T=\gamma _{\infty }( PG_1)\cap T$ is finite. Therefore  we can assume that $T=1$, in other words, that $F(G)$ is a $p$-group.

 If $|G/F(G)|=p$, then $G$ is a pro-$p$ group, so it is pro\-nil\-po\-tent, which means that $\gamma _{\infty }( G)=1$ and the proof is complete. If $G/F(G)$ is a non-abelian simple group, then we choose another prime $r\ne p$ dividing $|G/F(G)|$ and repeat the same arguments as above with $r$ in place of $p$. As a result, we reduce the proof to the case $F(G)=1$, where the result is obvious.

We now finish the proof of Theorem~\ref{t-eprof} by induction on  $|G/F(G)|$. The basis of this induction where $G/F(G)$ is a simple group was proved above. Now suppose that $G/F(G)$ has  a nontrivial proper normal subgroup with full inverse image $N$, so that $F(G)<N\lhd G$. Since $F(N)=F(G)$, by induction applied to $N$ the group $\gamma _{\infty }(N)$ is finite. Since $N/\gamma _{\infty }(N)\leq F( G/\gamma _{\infty }(N))$,  by induction applied to $G/\gamma _{\infty }(N)$ the group $ \gamma _{\infty }(G/\gamma _{\infty }(N) )$ is also finite. As a result, $\gamma _{\infty }(G) $ is finite, as required.
\end{proof}

\section{Compact almost Engel groups}
\label{s-comp}
In this section we prove the main Theorem~\ref{t-e}
about compact almost Engel groups.  We use the structure theorems for compact groups and the results of the preceding section on profinite almost Engel groups.

Recall that a group $H$ is said to be \emph{divisible} if for every $h\in H$ and every positive integer $k$ there is an element $x\in H$ such that $x^k=h$.

\begin{proposition}\label{pr-div}
If $H$ is an almost Engel divisible group, then $H$ is in fact an Engel group.
\end{proposition}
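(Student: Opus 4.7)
The plan is to show that divisibility forces every Engel sink to be trivial, which by the final sentence in Definition~\ref{d} is equivalent to $H$ being an Engel group. The key leverage will come from combining Lemma~\ref{l-c-s} with the fact that in a divisible group any element $g$ has high roots that automatically lie in $C_H(g)$.

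Fix $g\in H$ and put $m=|{\mathscr E}(g)|$. First I would use divisibility of $H$ to pick an element $h\in H$ with $h^{m!}=g$. Since $h$ commutes with $h^{m!}$, this $h$ sits in $C_H(g)$. Lemma~\ref{l-c-s} then tells me that $h^{m!}$ centralizes ${\mathscr E}(g)$, but $h^{m!}$ is nothing other than $g$. So I conclude that $g$ itself centralizes its own Engel sink, i.e.\ $[z,g]=1$ for every $z\in{\mathscr E}(g)$.

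To finish, apply Lemma~\ref{l-sink}: any $z\in{\mathscr E}(g)$ can be written as $z=[z,g,\dots,g]$ with $g$ occurring at least once. Since $[z,g]=1$, the inner commutator collapses and forces $z=1$. Hence ${\mathscr E}(g)=\{1\}$, meaning $g$ is an Engel element in the sense introduced earlier. As $g\in H$ was arbitrary, $H$ is an Engel group.

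The only nontrivial step is the one-line observation that divisibility supplies a centralizing $m!$-th root of $g$; once that is in place, Lemmas~\ref{l-sink} and~\ref{l-c-s} do all of the work and no further structural theory of divisible groups is needed. I do not anticipate any obstacle beyond spotting that trick.
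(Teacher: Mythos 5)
Your proposal is correct and follows essentially the same argument as the paper: take an $m!$-th root $h$ of $g$, note $h\in C_H(g)$, and apply Lemma~\ref{l-c-s} to conclude that $g=h^{m!}$ centralizes its own sink, which forces ${\mathscr E}(g)=\{1\}$. The paper closes by citing minimality of the sink rather than Lemma~\ref{l-sink}, but the two finishing steps are equivalent.
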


\begin{proof}
We need to show that ${\mathscr E}(h)=\{ 1\}$ for every $h\in H$.
Let $|{\mathscr E}(h)|=k$.  Let $g\in H$ be an element such that $g^{k!}=h$. Since $g$ centralizes $h$, by Lemma~\ref{l-c-s} we obtain that $h=g^{k!}$ centralizes ${\mathscr E}(h)$. By the minimality of the $h$-sink, then ${\mathscr E}(h)=\{1\}$, as required.
\end{proof}

We are ready to prove Theorem~\ref{t-e}.

\begin{proof}[Proof of Theorem~\ref{t-e}] Let $G$ be an almost Engel compact  group; we need to show that there is a finite subgroup $N$ such that $G/N$ is locally nilpotent.
 By the well-known structure theorems (see, for example, \cite[Theorems~9.24 and 9.35]{h-m}), the connected component of the identity $G_0$ in $G$ is a divisible group such that $G_0/Z(G_0)$ is a Cartesian product of simple compact Lie groups, while the quotient $G/G_0$ is a profinite group. By Proposition~\ref{pr-div}, $G_0$ is  an Engel group. Compact Lie groups are linear groups, and linear Engel groups are locally nilpotent by the Garashchuk--Suprunenko theorem \cite{gar-sup} (see also \cite{grb}). Hence $G_0=Z(G_0)$ is an abelian subgroup.

 \begin{remark}
 If a compact group $G$ is an Engel group, then by the above $G$ is an extension of an abelian subgroup $G_0$ by a profinite group $G/G_0$. Being an Engel group, $G/G_0$ is locally nilpotent by the Wilson--Zelmanov theorem \cite[Theorem~5]{wi-ze}. It is known that an Engel abelian-by-(locally nilpotent) group is locally nilpotent (see, for example, \cite[12.3.3]{rob}). This gives an alternative proof of Medvedev's theorem \cite{med}.
 \end{remark}

We proceed with the proof of Theorem~\ref{t-e}.

\begin{lemma}\label{l-eng}
For every $g\in G$ we have ${\mathscr E}(g)\cap G_0=1$, which is equivalent to the fact that for any $x\in G_0$ we have
$$
[x,g,\dots ,g]=1
$$
if $g$ is repeated sufficiently many times.
\end{lemma}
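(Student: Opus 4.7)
The plan is to show that the map $\phi\colon G_0\to G_0$ defined by $\phi(x)=[x,g]$ is a continuous group endomorphism whose $n$-th iterate vanishes on all of $G_0$ for some $n$; this immediately yields both assertions of the lemma, since the $n$-fold commutator $[x,g,\dots,g]$ equals $\phi^n(x)$.

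First, I would observe that because $G_0$ is abelian and normal in $G$, the subgroup $M=\langle G_0,g\rangle$ is metabelian with $M'\subseteq G_0$, and by the remarks after Lemma~\ref{l-sink} the $g$-sink of $M$ is ${\mathscr E}(g)\cap M$. Lemma~\ref{l-metab}(a) then tells me that this sink is a normal subgroup of $M$ contained in $M'\subseteq G_0$, so $S:={\mathscr E}(g)\cap G_0$ is a \emph{finite subgroup} of $G_0$. Using the abelianness of $G_0$ one checks directly that $\phi(xy)=\phi(x)\phi(y)$, so $\phi$ is a continuous group endomorphism of $G_0$.

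Next, set $K_n:=(\phi^n)^{-1}(S)$, a closed subgroup of $G_0$. Since $\phi$ permutes ${\mathscr E}(g)$ and hence acts on $S$, the chain $K_0\subseteq K_1\subseteq\cdots$ is increasing; and the almost Engel hypothesis forces $G_0=\bigcup_n K_n$, because for every $x\in G_0$ the sequence $\phi^n(x)$ stays in $G_0$ and eventually lies in ${\mathscr E}(g)$, so in $S$. As $G_0$ is compact Hausdorff, the Baire category theorem produces some $K_n$ with nonempty interior; being a subgroup, it is then open, and by connectedness of $G_0$ it must equal $G_0$. Thus $\phi^n(G_0)\subseteq S$.

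Finally, since $G_0$ is divisible, its continuous homomorphic image $\phi^n(G_0)$ is also divisible. But a finite divisible group is trivial (for any prime $p$ Cauchy's theorem would produce an element of order $p$, contradicting the surjectivity of the $p$-th power map). Hence $\phi^n(G_0)=1$, giving the uniform statement $[x,g,\dots,g]=1$ for all $x\in G_0$, and in particular ${\mathscr E}(g)\cap G_0=1$.

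The main conceptual obstacle is the passage from the pointwise almost Engel condition on $G_0$ to a uniform vanishing; this is where the Baire category argument is essential, and divisibility of $G_0$ then collapses the finite uniform image $S$ to $1$.
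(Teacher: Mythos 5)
Your proof is correct, but it takes a genuinely different route from the paper's. Both arguments start identically, using Lemma~\ref{l-metab}(a) in the metabelian subgroup $G_0\langle g\rangle$ to see that $S={\mathscr E}(g)\cap G_0$ is a finite subgroup of $G_0$. From there the paper stays inside $S$ and argues by contradiction: it picks $z_1\in S$ of prime order $p$, uses divisibility of $G_0$ to extract $p^k$-th roots $z_k$ of $z_1$, and shows via Lemma~\ref{l-metab}(b) (orders are constant along orbits of $z\mapsto[z,g]$) that the elements $y_k=[z_k,g,\dots,g]$ lie in $S$ and have order exactly $p^{k+1}$, contradicting $|S|<\infty$. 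You instead exploit the topology of $G_0$: covering the compact group $G_0$ by the closed subgroups $K_n=(\phi^n)^{-1}(S)$ and applying Baire category gives some open $K_n$, which equals $G_0$ by connectedness (divisibility would do equally well here, since a divisible abelian group has no proper finite-index subgroups), and then divisibility of the image $\phi^n(G_0)\leq S$ collapses it to the identity. Your version buys a stronger, uniform conclusion --- a single $n=n(g)$ with $[x,g,\dots,g]=1$ for all $x\in G_0$ simultaneously --- and is close in spirit to the Baire argument the paper itself uses in Proposition~\ref{p-pf1}; the paper's version is more elementary, using no topology on $G_0$ beyond its definition, and would apply verbatim to any divisible abelian normal subgroup of an abstract almost Engel group. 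One small step worth spelling out at the end of your argument: the deduction ${\mathscr E}(g)\cap G_0=1$ from $\phi^n(G_0)=1$ uses Lemma~\ref{l-sink}, since each $z\in S$ satisfies $z=\phi^{jk}(z)$ for some $k\geq 1$ and all $j$, so taking $jk\geq n$ gives $z\in\phi^n(G_0)=1$.
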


\begin{proof}
Suppose the opposite and choose $1\ne z\in {\mathscr E}(g)\cap G_0$. By Lemma~\ref{l-sink} then $z=[z,g,\dots, g]$ with $g$ occurring at least once. Hence  $z$ belongs also to the $g$-sink within the semidirect product $G_0\langle g\rangle$. Since this subgroup is metabelian, by Lemma~\ref{l-metab}(a) the $g$-sink in $G_0\langle g\rangle$ is a subgroup, which is equal to ${\mathscr E}(g)\cap G_0$.  Therefore we can choose $z_1$ in ${\mathscr E}(g)\cap G_0$ of some prime order $p=|z_1|$.  Again by Lemma~\ref{l-sink} we have $z_1=[z_1,g,\dots, g]$ with $g$ occurring at least once.
In the divisible group $G_0$ for every $k=1,2,\dots $ there is an element $z_k$ such that $z_k^{p^k}=z_1$. We have $y_k= [z_k, g,\dots ,g]\in {\mathscr E}(g)\cap G_0$ when $g$ is repeated sufficiently many times.
 Then $y_k^{p^k}= [z_k^{p^k}, g,\dots ,g]=[z_1,g,\dots, g]$, which is an element of the orbit of $z_1$ in ${\mathscr E}(g)\cap G_0$ under the mapping $x\to [x,g]$ and therefore has the same order $p=|z_1|$ by Lemma~\ref{l-metab}(b). Thus, $y_k$ is an element of ${\mathscr E}(g)$  of order exactly $p^{k+1}$, for $k=1,2,\dots $. As a result, ${\mathscr E}(g)$ is infinite, a contradiction with the hypothesis.
\end{proof}

Applying Theorem~\ref{t-eprof} to the almost Engel profinite group $\bar G=G/G_0$ we obtain a finite normal subgroup $D$ with locally nilpotent quotient. Then $D$ contains all Engel sinks $\bar{\mathscr E}(g)$ of elements $g\in\bar G$ and therefore the subgroup $E$ generated by them:
$$
E:=\langle \bar{\mathscr E}(g)\mid g\in \bar G\rangle\leq D.
$$
Clearly, $\bar{\mathscr E}(g)^h=\bar{\mathscr E}(g^h)$ for any $h\in\bar G$; hence  $E$ is a normal finite subgroup of $\bar G$. We replace $D$ by $E$ in the sense that $\bar G/E$ is also locally nilpotent by the Wilson--Zelmanov theorem \cite[Theorem~5]{wi-ze}, since this is an Engel profinite group.

We now consider the action of  $\bar G$ by automorphisms  on $G_0$ induced by conjugation.

\begin{lemma}\label{l-central}
The subgroup $E$ acts trivially on $G_0$.
\end{lemma}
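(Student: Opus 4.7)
The plan is to view the conjugation action of $G$ on the abelian normal subgroup $G_0$ as a homomorphism $\rho\colon\bar G\to\operatorname{Aut}(G_0)$ (well defined because $G_0$ is abelian), and in fact to prove the stronger statement that every element of $\bar G$ of finite order acts trivially on $G_0$; applied to the finite subgroup $E$ this yields the lemma. Writing $G_0$ additively and fixing $\bar g\in\bar G$ with a lift $g\in G$, the automorphism $\alpha:=\rho(\bar g)$ satisfies
\[
[x,\underbrace{g,\dots,g}_n]=(\alpha-1)^n(x)
\]
for every $x\in G_0$, so Lemma~\ref{l-eng} says that the closed subgroups $K_n:=\ker(\alpha-1)^n$ exhaust $G_0$.

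Next I would invoke the Baire category theorem, valid since $G_0$ is compact Hausdorff: some $K_n$ must have non-empty interior and therefore be an open subgroup, and because $G_0$ is connected this forces $K_n=G_0$. Thus $\alpha-1$ is globally nilpotent on $G_0$, so $\alpha$ is unipotent. On the other hand, if $\bar g$ has finite order $m$, then $g^m\in G_0$ and conjugation by $g^m$ is trivial, whence $\alpha^m=1$; this applies in particular to every element of the finite group $E$.

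It remains to show that an automorphism of $G_0$ that is both unipotent and of finite order must be the identity. For this I would pass to Pontryagin duality: since $G_0$ is compact and connected, its dual $\hat G_0$ is a discrete torsion-free abelian group, and the dual operator $\alpha^{*}\in\operatorname{Aut}(\hat G_0)$ still satisfies $(\alpha^{*}-1)^n=0$ and $(\alpha^{*})^m=1$. Extending $\alpha^{*}$ by linearity to the $\mathbb Q$-vector space $V:=\hat G_0\otimes_{\mathbb Z}\mathbb Q$, its minimal polynomial must divide both $(X-1)^n$ and the separable polynomial $X^m-1$, hence equals $X-1$. Therefore $\alpha^{*}=1$ on $V$, and since $\hat G_0\hookrightarrow V$ is injective we deduce $\alpha^{*}=1$ on $\hat G_0$ and, by Pontryagin duality, $\alpha=1$ on $G_0$.

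The main obstacle is this last step, which requires combining in sequence the two structural features of $G_0$: compactness, via Baire category, to upgrade the pointwise nilpotence furnished by Lemma~\ref{l-eng} to a uniform nilpotence, and connectedness, via Pontryagin duality and tensoring with $\mathbb Q$, to reduce the conclusion to the classical characteristic-zero statement that a unipotent linear automorphism of finite order is trivial.
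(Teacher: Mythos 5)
Your proof is correct, but it takes a genuinely different route from the paper. The paper decomposes $G_0=A_0\times\prod_pA_p$ into its torsion-free divisible part and Sylow subgroups, kills the action of $E$ on each $A_p$ by building finite $\langle z,g\rangle$-invariant $p$-subgroups and exploiting nilpotency of finite $p$-groups, kills the action on the torsion-free quotient by Maschke's theorem over $\mathbb Q$, and then glues the two cases with a divisibility argument that manufactures elements of unbounded order to contradict finiteness of the order of $g$. You instead work with the single operator $\alpha-1$ on all of $G_0$ at once: Lemma~\ref{l-eng} gives $G_0=\bigcup_n\ker(\alpha-1)^n$, Baire category plus connectedness upgrades this to uniform unipotence, the relation $g^m\in G_0$ gives $\alpha^m=1$, and Pontryagin duality (the dual of a compact connected abelian group is torsion-free) reduces everything to the fact that a unipotent automorphism of finite order of a $\mathbb Q$-vector space is trivial. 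All the individual steps check out: $\alpha-1$ is a continuous endomorphism because $G_0$ is abelian and normal, so the $K_n$ are closed subgroups; the duality of $(\alpha-1)^n=0$ to $(\alpha^*-1)^n=0$ is legitimate by additivity of the duality functor; and the torsion-freeness of $\widehat{G_0}$ is exactly what replaces the paper's separate treatment of the $A_p$ (on which $\alpha$ itself need not act semisimply). Your argument is shorter and proves more --- every torsion element of $G/G_0$ acts trivially on $G_0$, not just elements of $E$ --- and it reuses the Baire category tool already deployed in Proposition~\ref{p-pf1}; the price is importing Pontryagin duality, whereas the paper's proof, though longer, stays entirely within elementary finite-group and divisibility arguments.
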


\begin{proof}
The abelian divisible group $G_0$ is a direct product $A_0\times\prod _pA_p$ of a torsion-free divisible group $A_0$ and Sylow subgroups $A_p$ over various primes $p$. Clearly, every Sylow subgroup is normal in $G$.

First we show that $E$ acts trivially on each $A_p$. It is sufficient to show that for every $g\in \bar G$ every element  $z\in \bar{\mathscr E}(g)$ acts trivially on $A_p$. Consider the action of $\langle z, g\rangle$ on $A_p$. Note that $\langle z, g\rangle=\langle z^{\langle  g\rangle}\rangle\langle  g\rangle$, where $\langle z^{\langle  g\rangle}\rangle$ is  a finite $g$-invariant subgroup, since it is contained in the finite subgroup $E$. For any $a\in A_p$ the subgroup
$$
\langle a^{\langle g\rangle}\rangle=\langle a,[a,g], [a,g,g],\dots \rangle
$$
is a finite $p$-group by Lemma~\ref{l-eng}, and this subgroup is  $g$-invariant.
 Its images under the action of elements of the finite group  $\langle z^{\langle  g\rangle}\rangle$  generate a finite $p$-group $B$, which is $\langle z, g\rangle$-invariant. Lemma~\ref{l-eng} implies that the image of $\langle z, g\rangle$  in its action on $B$ must be a $p$-group. Indeed, otherwise this image contains a $p'$ element $y$ that acts non-trivially on the Frattini quotient $V=B/\Phi (B)$. Then  $V=[V,y]$ and $C_V(y)=1$, whence  $[V,y]=\{[v,g]\mid v\in [V,y]\}$ and therefore also $[V,y]=\{[v,{y,\dots ,y}]\mid v\in [V,y]\} $ with $y$ repeated $n$ times, for any $n$, contrary to Lemma~\ref{l-eng}. But since $z$ is an element of  $\bar{\mathscr E}(g)$, by Lemma~\ref{l-sink} we have  $z=[z,g,\dots ,g]$ with at least one occurrence of $g$. Since a finite $p$-group is nilpotent, this implies that the image of $z$ in its action on $B$ must be trivial. In particular, $z$ centralizes $a$. As a result $E$ acts trivially on $A_p$, for every prime $p$.

 We now show that $E$ also acts trivially on the quotient $W=G_0/\prod _pA_p$ of $G_0$ by its torsion part. Note that $W$ can be regarded as a vector space over ${\mathbb Q}$. Every element $g\in E$ has finite order and therefore by Maschke's theorem   $W=[W,g]\times C_W(g)$. If $[W,g]\ne 1$, then $[W,g]=\{[w,{g,\dots ,g}]\mid w\in [W,g]\} $ with $g$ repeated $n$ times, for any $n$. This contradicts Lemma~\ref{l-eng}.

 Thus, $E$ acts trivially both on $W$ and on  $\prod _pA_p$. Then any automorphism of $G_0$ induced by conjugation by $g\in E$ acts on every element $a\in A_0$ as $a^g=at$, where $t=t(a)$ is an element of finite order in $G_0$. Since $a^{g^i}=at^i$, the order of $t$ must divide the order of $g$. Assuming the action of $E$ on $G_0$ to be non-trivial, choose an element $g\in E$  acting on $G_0$ as an automorphism of some prime order $p$. Then there is $a\in A_0$ such that $a^g=at$, where $t\in G_0$ has order $p$. For any $k=1,2,\dots $ there is an element $a_k\in A_0$ such that $a_k^{p^k}=a$. Then $a_k^g=a_kt_k$, where $t_k^{p^k}=t$. Thus $|t_k|=p^{k+1}$, and therefore  $p^{k+1}$ divides the order of $g$, for every $k=1,2,\dots$. We arrived  at a contradiction since $g$ has finite order.
\end{proof}

Let $F$ be the full inverse image of $E$ in $G$. Recall that we have normal subgroups $G_0\leq F\leq G$ such that $G_0$ is divisible, $F/G_0$ is finite, and $G/F$ is locally nilpotent. We aim at producing a finite normal subgroup $N$ such that $G/N$ is locally nilpotent.

 By Lemma~\ref{l-central} the subgroup $G_0$ is contained in the centre of the full inverse image $F$ of $E$, so that $F$ has centre of finite index. Then the derived subgroup $F'$ is finite by Schur's theorem \cite[Satz~IV.2.3]{hup}. We can assume that $F'=1$, so that then $F$ is abelian. In the abstract abelian group $F$ the divisible subgroup $G_0$ has a complement $C$, which is obviously finite, $F=G_0\times C$. Let $M$ be the  normal closure of $C$ in $G$.

 Note that $G/M$ is a locally nilpotent group. Indeed, $G/F$ is locally nilpotent, while $F/M$ is $G$-isomorphic to $G_0/(G_0\cap M)$ so that $G/M$ is an Engel group by Lemma~\ref{l-eng}. Being an abelian-by-(locally nilpotent) Engel group, then $G/M$ is locally nilpotent.

  Consider the natural semidirect product $M\rtimes (G/C_G(M))$ with the induced topology, in which the action of $G/C_G(M)$ on $M$ is continuous. Both $M$ and $G/C_G(M)$ are profinite groups; therefore  $M\rtimes (G/C_G(M))$ is also a profinite group (see \cite[Lemma 1.3.6]{wilson}).
  Note that $ G/C_G(M)$ is locally nilpotent, since $M\leq C_G(M)$.

 The group $M\rtimes (G/C_G(M))$ is also almost Engel, so by Theorem~\ref{t-eprof} it contains a finite normal subgroup with locally nilpotent quotient. This finite subgroup therefore contains the subgroup $K$ generated by all Engel sinks in $M\rtimes (G/C_G(M))$; since  $ G/C_G(M)$ is locally nilpotent, $K\leq M$.
 Since $G/M$ is also locally nilpotent, the Engel sinks in $G$ are all contained in $M$ and coincide with the Engel sinks in $M\rtimes (G/C_G(M))$. Renaming $K$ by $N$ as a subgroup of $G$ we arrive at the required result. Indeed, $N$ is a normal subgroup because  ${\mathscr E}(g)^h={\mathscr E}(g^h)$ for any $h\in G$. The  group $G/N$ is locally nilpotent being an Engel group which is an extension of an abelian group $F/N$ by a locally nilpotent group $G/F$. The proof of Theorem~\ref{t-e} is complete.
 \end{proof}

\begin{corollary}
\label{c-em}
Let $G$ be an almost Engel  compact group such that for some positive integer  $m$  all Engel sinks ${\mathscr E}(g)$ have cardinality at most $m$. Then $G$ has a finite normal subgroup $N$ of order bounded in terms of $m$ such that $G/N$ is locally nilpotent.
\end{corollary}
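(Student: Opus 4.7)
The strategy is to retrace the proof of Theorem~\ref{t-e} and check that each finite piece that arises is $m$-bounded. The key preliminary observation is that the corollary already holds in the profinite case: if $\bar G$ is profinite with all Engel sinks of cardinality $\leq m$, then every finite continuous quotient $\bar G/U$ still has Engel sinks of size $\leq m$ (sinks pass to images), so Theorem~\ref{t-finite} gives a uniform bound $|\gamma_\infty(\bar G/U)|\leq n(m)$. Passing to the inverse limit yields $|\gamma_\infty(\bar G)|\leq n(m)$, and $\bar G/\gamma_\infty(\bar G)$ is locally nilpotent by Lemma~\ref{l-p-n}. This takes care of the profinite reduction at both places where it is invoked in the proof of Theorem~\ref{t-e}.

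Apply this to $\bar G=G/G_0$: there is a normal subgroup of $m$-bounded order with locally nilpotent quotient, and, as in the proof of Theorem~\ref{t-e}, we may replace it by the (still $m$-boundedly small) subgroup $E=\langle \bar{\mathscr E}(g)\mid g\in\bar G\rangle$. Let $F$ be the full inverse image of $E$ in $G$. By Lemma~\ref{l-central} the subgroup $E$ centralizes $G_0$, so $G_0\leq Z(F)$ and
\[
[F:Z(F)]\leq [F:G_0]=|E|
\]
is $m$-bounded. Schur's theorem \cite[Satz~IV.2.3]{hup} then gives that $|F'|$ is bounded in terms of $[F:Z(F)]$, hence in terms of $m$. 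This is the only place in the argument where the quantitative $m$-bound on $|F'|$ is needed, and is the main point that distinguishes the proof from the qualitative Theorem~\ref{t-e}; the rest is purely structural bookkeeping.

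Now pass to $\tilde G=G/F'$, so that the image of $F$ is abelian and splits (as an abstract abelian group) as $\tilde G_0\times C$, where $\tilde G_0=G_0F'/F'$ is divisible and $|C|=[\tilde F:\tilde G_0]\leq |E|$ is $m$-bounded. Let $M$ be the closed normal closure of $C$ in $\tilde G$, and form the profinite almost Engel group $M\rtimes(\tilde G/C_{\tilde G}(M))$. Its Engel sinks still have size $\leq m$, so the profinite version of the corollary established in the first paragraph produces a finite normal subgroup of $m$-bounded order with locally nilpotent quotient. Exactly as in the proof of Theorem~\ref{t-e}, this subgroup contains the subgroup $K$ generated by the Engel sinks of $M\rtimes(\tilde G/C_{\tilde G}(M))$, and $K\leq M\leq \tilde G$. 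Let $N$ be the full inverse image of $K$ in $G$; then $|N|=|K|\cdot|F'|$ is $m$-bounded, and $G/N=\tilde G/K$ is locally nilpotent by the same final argument (abelian-by-locally-nilpotent Engel group) used to conclude Theorem~\ref{t-e}.
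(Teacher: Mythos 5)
Your argument is correct, but it takes a genuinely different route from the paper's. You make the proof of Theorem~\ref{t-e} quantitative step by step: you first note that the profinite case of the corollary follows from Theorem~\ref{t-finite} plus Lemma~\ref{l-res}(b) and Lemma~\ref{l-p-n} (this is exactly what the paper means when it says the profinite case is immediate from Theorem~\ref{t-finite}), and then you audit the compact-group argument, observing that $E$, $F'$, $C$ and $K$ are all of $m$-bounded order. The one genuinely new ingredient you need is the \emph{quantitative} form of Schur's theorem -- that $|F'|$ is bounded in terms of $[F:Z(F)]$, not merely finite -- which is a standard strengthening (due essentially to Wiegold) of the statement cited from \cite{hup}, and you correctly identify it as the only non-bookkeeping point. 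The paper instead uses Theorem~\ref{t-e} as a black box: once $G$ is known to be finite-by-(locally nilpotent), every abstract finitely generated subgroup $H$ is finite-by-nilpotent, hence residually finite, so Theorem~\ref{t-finite} applied to the finite quotients of $H$ bounds $|\gamma_{\infty}(H)|$ by $n(m)$, and a standard inverse-limit (compactness) argument assembles a single normal subgroup of $m$-bounded order in $G$. The paper's route is shorter and avoids both the quantitative Schur theorem and the re-examination of the structural decomposition; your route is more explicit about which subgroup of $G$ does the job, at the cost of that extra auxiliary bound and a longer verification.
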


\begin{proof}
By Theorem~\ref{t-e} the group $G$ is finite-by-(locally nilpotent). Therefore every abstract finitely generated subgroup $H$ of $G$ is finite-by-nilpotent and residually finite. By Theorem~\ref{t-finite}, every finite quotient of $H$ has nilpotent residual of  $m$-bounded order. Hence $\gamma _{\infty}(H)$ is also finite of $m$-bounded order, and $H/\gamma _{\infty}(H)$ is nilpotent. Thus, every finitely generated subgroup of $G$ has a normal subgroup of $m$-bounded order with nilpotent quotient. By the standard inverse limit argument, the group $G$ has a normal subgroup of $m$-bounded order with locally nilpotent quotient.
\end{proof}

\section*{Acknowledgements}
The authors thank Gunnar Traustason and John Wilson for stimulating discussions.

The first author was supported  by the Russian Science Foundation, project no. 14-21-00065,
and the second by the Conselho Nacional de Desenvolvimento Cient\'{\i}fico e Tecnol\'ogico (CNPq), Brazil. The first author thanks  CNPq-Brazil and the University of Brasilia for support and hospitality that he enjoyed during his visits to Brasilia.

\end{document}